\newcommandx{\ms}[2][1=]{\todo[linecolor=red,backgroundcolor=red!25,bordercolor=red,#1]{#2}}
\newcommand{\changed}[1]{{\leavevmode\color{black}#1}}
\newtheorem{thm}{Theorem}[section]
\newtheorem{cor}[thm]{Corollary}
\newtheorem{lem}[thm]{Lemma}
\newtheorem{prop}[thm]{Proposition}
\theoremstyle{definition}
\newtheorem{defn}[thm]{Definition}
\numberwithin{equation}{section}
\newcommand{\R}{\mathbb{R}}
\newcommand{\calC}{\mathcal{C}}
\newcommand{\calD}{\mathcal{D}}
\newcommand{\dom}[1]{D(#1)}
\DeclareMathOperator{\ran}{ran}
\newcommand{\sbvek}[2]{\left[\begin{smallmatrix}#1\\#2\end{smallmatrix}\right]}
\newcommand{\bvek}[2]{\begin{bmatrix}#1\\#2\end{bmatrix}}
\newenvironment{smallbmatrix}
{\left[\begin{smallmatrix}}
{\end{smallmatrix}\right]}
\title[PH structures in optimal control: primal-dual gradient method and CbI]{Port-Hamiltonian structures in infinite-dimensional optimal control: Primal-Dual gradient method and control-by-interconnection}
\author{Hannes Gernandt and Manuel Schaller}
\begin{document}
\begin{abstract}
In this note, we consider port-Hamiltonian structures in numerical optimal control of ordinary differential equations. By introducing a novel class of nonlinear monotone port-Hamiltonian (pH) systems, we show that the primal-dual gradient method may be viewed as an infinite-dimensional nonlinear pH system. The monotonicity and the particular block structure arising in the optimality system is used to prove exponential stability of the dynamics towards its equilibrium, which is a critical point of the first-order optimality conditions. Leveraging the port-based modeling, we propose an optimization-based controller in a suboptimal receding horizon control fashion. To this end, the primal-dual gradient based optimizer-dynamics is coupled to a pH plant dynamics in a power-preserving manner. We show that the resulting model is again monotone pH system and prove that the closed-loop exhibits  local exponential convergence towards the equilibrium.

\smallskip
\noindent \textbf{Keywords.} monotone operator, port-Hamiltonian system, optimal control, passivity, primal-dual gradient method, control-by-interconnection
\end{abstract}
\maketitle

\section{Introduction}
\noindent Physics-based modeling based on the underlying systems energy has received a lot of attention in recent years. In this context, a very prominent class is given by \emph{port-Hamiltonian (pH) systems}. This modeling class extends the well-established Hamiltonian approach to systems with input and outputs, the so-called ports. These ports allow for interaction of pH systems with the environment, e.g., by control, estimation, or by performing structure-preserving coupling. The latter in particular renders pH modeling appealing for complex applications, as it enables modular assembly of complex systems arising in district heating networks~\cite{hauschild2020port}, power networks~\cite{fiaz2013port,GernHins22,GerHRS21}, fluid dynamics~\cite{rashad2021port,Scha_Reis24}, electromagnetics~\cite{clemens2024port,ReisStykel23} or irreversible thermodynamics~\cite{ramirez2022overview}, to name a few.
As pH systems are dissipative by design, they provide a rich framework for mathematical analysis~\cite{jacob2012linear,skrepek2021well, le2005dirac}, numerical methods~\cite{MehU22,kotyczka2018weak}, or control design~\cite{SchJ14,ortega2002interconnection,breiten2023structure}. Port-Hamiltonian structures have also been utilized in optimal control and optimization, i.e., to study optimal network flow problems \cite{DogaKLST2023} or energy-optimal control~\cite{schaller2021control,faulwasser2022optimal}, see also \cite{schaller2024energy} for an application to adaptive high-rise buildings.

In addition to pH systems, which are considered as a model of the underlying physical system, one may also have port-Hamiltonian structures by means of numerical methods applied to the problem. One of the most prominent examples are \emph{gradient-type methods} formulated in continuous time, which often lead to pH systems. In an optimization context, the primal-dual gradient method was formulated in the pH framework in \cite{StegPers15}. There, the inherent dissipative structure and recent generalizations of LaSalle's invariance principle~\cite{CherMall2016} are exploited to show convergence of the primal-dual gradient dynamics towards the set of points satisfying the Karush-Kuhn-Tucker conditions. For non-quadratic but convex cost functions, the formulation of the primal-dual gradient dynamics as a continuous-time system requires the notion of pH systems with a monotone nonlinearity, stemming from the nonlinear derivative of the convex cost function. A first framework for monotone pH systems in finite dimensions was recently proposed in \cite{CamSch23}. There, the authors focus on the geometric description of the system class based on monotone structures rather than on analytic aspects such as the existence of solutions or characterization of stability. 

In optimal control, the optimization variable is separated into a state and a control variable. 
Thus, optimal control is the basis for various control algorithms. A well-known representative of an optimization-based controller is the Riccati feedback for linear systems.
For nonlinear problems with state and control constraints, a widely used optimization-based controller is \emph{Model Predictive Control} (MPC; \cite{Gruene17,RawlMayn17}). After obtaining a state measurement or a state estimation of the system to be controlled (in the following called plant), this state is set as an initial condition, and an optimal control problem on a finite horizon is solved. Then, an initial part of the optimal control on this finite horizon is fed back into the system to be controlled and the process is repeated. This optimization-based feedback controller controller is well-studied for various system classes and in particular provides access to suboptimality and stability guarantees. However, in complex applications, solving the optimal control problem may be infeasible. Thus, one may rely on suboptimal solutions as feedback, leading to \emph{suboptimal MPC} \cite{GeSc_ScokMayn99}. A very successful suboptimal MPC scheme is \emph{real-time iteration}~\cite{GeSc_DiehBock05}. In its original form, one only performs one step of a Lagrange-Newton method for the optimal control problem and feeds back the control of the current iterate into the system. If the time step is small enough compared with the nonlinearity of the problem, it may be shown that the plant and the Newton method converge simultaneously, hence achieving asymptotic stability of the controlled system. In \cite{Zanelli21} a proof based on Lyapunov arguments and allowing for convergent optimization algorithms convergent to $Q$ was provided. Another suboptimal MPC scheme is \emph{instant model predictive control} \cite{GeSc_YoshInou2019}, where one primal-dual gradient step is performed and the resulting control is used as a~suboptimal feedback. Recently, a structured analysis of suboptimal MPC algorithms was presented in \cite{Karapet23}, see also \cite{Haeberle20} for a related problem of iteratively constructing optimal feedback laws for finite-dimensional optimal control problems.

When modeling the dynamics of the optimization algorithm (in the following called \emph{optimizer dynamics}) in optimal control as a pH system, suboptimal MPC schemes may also be formed by coupling the optimizer dynamics to a pH plant in the spirit of \emph{control-by-interconnection (CbI)}~\cite{ortega2002interconnection}. Since both systems exhibit a port-Hamiltonian structure, a structure-preserving controller plant interconnection leads to a pH coupled optimizer-plant dynamics. For finite-dimensional problems, i.e., time-discretized ordinary differential equations, this approach was recently proposed in \cite{Pham22}. There, the convergence result is based on the analysis of primal-dual dynamics for finite-dimensional optimization problems in \cite{StegPers15}. Recently, in \cite{vu2023port}, the approach of \cite{Pham22} was extended to also include a port-Hamiltonian observer to estimate the current state of the plant as an initial condition for the optimal control problem. In the works mentioned above, all considered systems are finite dimensional, i.e., both the plant and the optimizer dynamics. In this work, we provide an approach allowing for an infinite-dimensional optimizer dynamics. This case occurs e.g. when considering optimal control problems with time-continuous ordinary differential equations or partial differential equations. 

The main contributions of this paper are the following. First, to prepare for a pH formulation of the optimizer dynamics, we define a novel class of infinite-dimensional nonlinear monotone port-Hamiltonian systems. 
We provide well-posedness results, a power-balance equation, and closedness under power-preserving coupling. 
Moreover, we prove an exponential stability result particularly tailored to block operators appearing in optimal control leveraging a recent sufficient condition for exponential stability of semigroups generated by dissipative linear block operator matrices from \cite{GernHins22}.  
As a second contribution, we derive infinite-dimensional optimizer dynamics for the primal-dual gradient method and prove that these may be considered as a monontone nonlinear \changed{pH system}. We prove exponential stability of the optimizer dynamics, that is, convergence to the optimal solution. 
Thirdly, we endow the optimizer dynamics with input and output ports and couple them to a pH plant in a power-preserving manner. Remarkably, this coupling utilizes the initial part of the optimal solution, as in MPC, however, does not necessitate the use of an observer and is directly formulated via the plant output. We show that the resulting closed-loop system is again a monotone \changed{pH system} and prove exponential stability. 

The paper is organized as follows. Section~\ref{sec:montone_phs} presents a novel class of monotone pH systems in Hilbert spaces, along with an examination of their properties, including passivity, stability, and structure-preserving interconnections. Section~\ref{sec:optimizer_dynamics} contains a derivation of the optimizer dynamics through the optimality system linked to the optimal control problem. Finally, in Section~\ref{sec:coupled_optimizer_plant}, we interconnect optimizer dynamics with finite-dimensional monotone plant dynamics, demonstrating that the resulting interconnected system forms a~monotone pH system that converges towards a~specified equilibrium.

\medskip

\noindent \textbf{Notation and preliminaries.}
 For a closed and densely defined (possibly nonlinear) operator $A:X \supseteq D(A) \to X$ with domain $D(A)$ on a real Hilbert space $(X,\langle \cdot,\cdot\rangle_X)$ with norm $\|x\|_X = \sqrt{\langle x,x\rangle}$, we denote the \emph{resolvent set} by \[\rho(A):=\{\lambda\in\mathbb{C}~| \text{$\lambda I-A$ is bijective}\}.\] 
Let $(Y,\langle \cdot,\cdot\rangle_Y)$ be another (real) Hilbert space. Then an operator $F:X\supseteq D(F)\rightarrow Y$ is called \emph{Frechét differentiable in $x\in D(F)$} if there exists bounded linear $\mathrm{D}F(x):X\rightarrow Y$ satisfying
\[
F(x+h) = F(x) + \mathrm{D}F(x)h + o(h),\quad \text{for all}\,\, h\in X\,\, \text{with}\,\, x+h\in D(F),
\]
where the remainder term $o:X\rightarrow Y$ satisfies $\tfrac{\|o(z)\|_Y}{\|z\|_X}\to 0$ for $\|z\|_X\to 0$. 

If $F$ is Fréchet differentiable at all $x\in D(F)$, then we simply call $F$ Fréchet differentiable, or just \emph{differentiable}. If $Y=\R$ then $\mathrm{D}F(x)\in X^*$ and by means of the Riesz representation theorem, we can identify $\mathrm{D}F(x)$ with the (unique) gradient $\nabla F(x)\in X$ of $F$ at $x$, satisfying $\mathrm{D}F(x)y=\langle\nabla F(x),y\rangle_X$ for all $y\in X$. 

A function $F:X\supseteq D(F)\rightarrow\mathbb{R}$ is called \emph{convex} if for all $\beta\in[0,1]$ and all $x_1,x_2\in D(F)$, 
 \begin{align*}
 F(\beta x_1+(1-\beta)x_2)\leq \beta F(x_1)+(1-\beta)F(x_2)
 \end{align*}
and if this inequality is strict for all $x_1\neq x_2$, then $F$ is called \emph{strictly convex}. Furthermore, if $F$ is differentiable with gradient $\nabla F$, then, see e.g.~\cite[Proposition 5.5]{EkelTema99}, $F$ is convex, if and only if for all $x,y\in D(F)$
\[
\langle x-y,\nabla F(x)-\nabla F(y)\rangle_X\geq 0
\]
\changed{and strictly convex if and only if there is $\alpha \geq 0$ such that for all $x,y\in D(F)$
\begin{align}\label{def:convex_deriv}
\langle x-y,\nabla F(x)-\nabla F(y)\rangle_X\geq \alpha \|x-y\|^2.
\end{align}}
\changed{For $t_f\geq 0$, we denote the space of continuous functions on $[0,t_f]$ with values in $X$ by $C([0,t_f];X)$ endowed with the usual maximum norm. Further, $W^{1,1}((0,t_f],X)$, denotes the space of weakly differentiable and integrable functions from $[0,t_f]$ to $X$ with integrable weak derivative.}

\section{Monotone Port-Hamiltonian systems}
\label{sec:montone_phs}
\noindent In this section, we propose a class of nonlinear port-Hamiltonian (pH) systems in a Hilbert space where the state-dependent part of the system dynamics is governed by an accretive operator. We prove shifted passivity, study convergence towards steady states and show that the class is closed under power-preserving interconnections.

First, we present the formal definition of the considered class.
\begin{defn}[\changed{(Maximally) monotone pH system}]
\label{def:mono_phs}
    Let $(X,\langle\cdot,\cdot\rangle)$ be a real Hilbert space and let $M:X\supseteq D(M)\rightarrow X$ be an (not necessarily linear) \emph{accretive} operator, that is,
\begin{align}
    \label{eq:M_dissip}
\langle M(x_1)-M(x_2),x_1-x_2 \rangle\geq 0 \quad \text{for all $x_1,x_2\in D(M)$.}
\end{align}
Moreover, let $(U,\langle \cdot,\cdot\rangle_U)$ be another real Hilbert space and $B:U\rightarrow X$ be linear and bounded. Then, a~\emph{monotone pH system} is defined by the equations
\begin{align}\tag{mPHS}
\label{eq:monotone_phs}
\begin{split}
    \tfrac{\mathrm{d}}{\mathrm{d}t}x(t) &= -M(x(t)) + Bu(t),\quad x(0)=x_0,\\
    y(t) &= B^* x(t),
\end{split}
\end{align}
\changed{for all $t\geq 0$, where $u:[0,\infty)\to U$ is an input and $x_0\in X$ is an initial state.}
If the governing operator $M$ is \emph{m-accretive}, that is, in addition to \eqref{eq:M_dissip} the surjectivity condition $\ran(\lambda I+M)=X$ holds for some \changed{(and hence for all) $\lambda>0$, see, e.g., \cite[Proposition 3.3]{Barb10}, then }\eqref{eq:monotone_phs} is called a \emph{maximally monotone pH system}.
\end{defn}
\changed{Note that in the above Definition~\ref{def:mono_phs}, we only introduce the abstract system class. In the following, we thus introduce suitable solution concepts which in view of the nonlinearity requires additional regularity of $u$ and $x_0$.}

\noindent \textbf{Homogeneous problems.} For m-accretive operators in a Hilbert space $X$, we may use nonlinear semigroups to study the  solutions of the nonlinear homogeneous Cauchy problem associated with \eqref{eq:monotone_phs} given by
\begin{align}
\label{eq:mono_cauchy}
 \tfrac{\mathrm{d}}{\mathrm{d}t}x(t) = -M(x(t)),\quad x(0)=x_0.
\end{align}
The emerging semigroups are one-parameter families of nonlinear operators $(T(t))_{t\geq 0}$ acting on a closed subset $G\subseteq X$ with the following properties, see, e.g.\ \cite[Definition 4.4]{Barb10}:
\begin{itemize}
    \item[\rm (i)] $T(t+\tau)x=T(t)(T(\tau)x)$ for $x\in G$, $t,\tau\geq 0$; 
    \item[\rm (ii)] $T(0)x=x$ for $x\in G$;
    \item[\rm (iii)] for any $x\in G$, the function $t\mapsto T(t)x$ is continuous on $[0,\infty)$.
\end{itemize}
The fundamental theorem on the generation of semigroups, see e.g.\ \cite[Proposition 4.2]{Barb10}, states that if $M+\omega I$ is m-accretive for some $\omega\in\R$ then there exists a (nonlinear) semigroup $T_M(t)$ of type $\omega$ on $G=\overline{D(M)}$, \changed{the closure of $D(M)$ in $X$}, which is given by 
\begin{align}
    \label{eq:semigroup}
T_M(t)x=\lim_{n\rightarrow\infty}\left(I+\tfrac{t}{n}M\right)^{-n}x.
\end{align}
\changed{Note that \eqref{eq:M_dissip} implies that $\lambda I+M$ is injective} for all $\lambda>0$ and due to m-accretivity, it is also surjective and therefore bijective. 

 In this work, we will consider \emph{strong solutions} on finite intervals of length $t_f>0$, which are functions $x\in C([0,t_f],X)\cap W^{1,1}((0,t_f],X)$ that are differentiable almost everywhere and satisfy~\eqref{eq:mono_cauchy} pointwise for almost every $t\in(0,t_f)$. For each $x_0\in D(M)$ the strong solutions of \eqref{eq:mono_cauchy} are given by \eqref{eq:semigroup}. 

\medskip 

\noindent \textbf{Inhomogeneous problems.} Here, we regard the solutions of the inhomogeneous Cauchy problem 
\begin{align}
\label{eq:inhomo}
 \tfrac{\mathrm{d}}{\mathrm{d}t}x(t) = -M(x(t))+f(t),\quad x(0)=x_0
\end{align}
for some $f\in L^1([0,t_f],X)$, $t_f>0$. We adapt~\cite[Theorem 4.4]{Barb10} to the special case that $M$ is m-accretive.
\begin{prop}
\label{prop:inhomo}
Let $M$ be an m-accretive operator and let $f\in W^{1,1}([0,t_f],X)$. Then for every $x_0\in D(M)$ there exists a unique strong solution of~\eqref{eq:inhomo} with $x\in W^{1,\infty}([0,t_f],X)$ that fulfills~\eqref{eq:inhomo} for almost every $t\in[0,t_f]$.
\end{prop}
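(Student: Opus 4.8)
The plan is to specialize the general theory of nonlinear contraction semigroups generated by m-accretive operators to the contraction case $\omega=0$, following the structure of the proof of \cite[Theorem 4.4]{Barb10}. Since $M$ is m-accretive, $-M$ generates a contraction semigroup on $\overline{D(M)}$, and for every $x_0\in\overline{D(M)}$ and $f\in L^1([0,t_f],X)$ the problem~\eqref{eq:inhomo} admits a unique \emph{mild solution} $x\in C([0,t_f],X)$ in the sense of B\'enilan, characterized by the family of integrated inequalities
\[
\tfrac12\|x(t)-\xi\|^2-\tfrac12\|x(s)-\xi\|^2\le\int_s^t\langle f(\tau)-M(\xi),\,x(\tau)-\xi\rangle\,\mathrm{d}\tau\qquad\text{for all }\xi\in D(M),\ 0\le s\le t\le t_f.
\]
Applying this to two sets of data $(x_0,f)$ and $(\tilde x_0,\tilde f)$ yields the comparison estimate $\|x(t)-\tilde x(t)\|\le\|x_0-\tilde x_0\|+\int_0^t\|f(\tau)-\tilde f(\tau)\|\,\mathrm{d}\tau$, which in particular gives uniqueness for~\eqref{eq:inhomo}.

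Next I would show that the mild solution is Lipschitz in time. For $h>0$ the shifted curve $t\mapsto x(t+h)$ is the mild solution corresponding to the data $(x(h),f(\cdot+h))$, so the comparison estimate gives
\[
\|x(t+h)-x(t)\|\le\|x(h)-x_0\|+\int_0^{t_f-h}\|f(\tau+h)-f(\tau)\|\,\mathrm{d}\tau\le\|x(h)-x_0\|+h\,\|\dot f\|_{L^1([0,t_f],X)},
\]
the last bound following from $f(\tau+h)-f(\tau)=\int_\tau^{\tau+h}\dot f(r)\,\mathrm{d}r$ together with Tonelli's theorem. To control $\|x(h)-x_0\|$ I would use the integrated inequality above with $\xi=x_0$ (admissible since $x_0\in D(M)$) and a standard Gronwall-type argument, obtaining $\|x(h)-x_0\|\le\int_0^h\|f(\tau)-M(x_0)\|\,\mathrm{d}\tau\le h\,(\|f\|_{C([0,t_f],X)}+\|M(x_0)\|)$. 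Hence $x$ is Lipschitz on $[0,t_f]$ with a constant depending only on $\|M(x_0)\|$ and $\|f\|_{W^{1,1}([0,t_f],X)}$. As $X$ is a Hilbert space it has the Radon--Nikodym property, so $x$ is differentiable a.e.\ with $\tfrac{\mathrm{d}}{\mathrm{d}t}x\in L^\infty([0,t_f],X)$; in particular $x\in W^{1,\infty}([0,t_f],X)$.

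It remains to upgrade the mild solution to a strong one, i.e.\ to show that $x(t)\in D(M)$ and $\tfrac{\mathrm{d}}{\mathrm{d}t}x(t)=-M(x(t))+f(t)$ for a.e.\ $t$. At every $t$ that is both a Lebesgue point of $f$ and a point of differentiability of $x$, dividing the integrated inequality (with arbitrary $\xi\in D(M)$) by $t-s$ and letting $s\uparrow t$ gives $\langle f(t)-\tfrac{\mathrm{d}}{\mathrm{d}t}x(t)-M(\xi),\,x(t)-\xi\rangle\ge0$ for all $\xi\in D(M)$. Since $M$ is maximal monotone --- which, in a Hilbert space, is exactly m-accretivity by Minty's theorem --- the pair $\bigl(x(t),\,f(t)-\tfrac{\mathrm{d}}{\mathrm{d}t}x(t)\bigr)$ then lies in the graph of $M$, so $x(t)\in D(M)$ and~\eqref{eq:inhomo} holds pointwise a.e.

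I expect this last step to be the genuine obstacle: it is precisely here that m-accretivity (rather than mere accretivity) is indispensable, since one has to identify the limiting difference quotient as an element of the graph of $M$, and this identification relies on the Lipschitz bound from the second step to ensure that the difference quotients are bounded and converge almost everywhere. The first two steps are by contrast routine once the B\'enilan integral formulation and its comparison principle are available.
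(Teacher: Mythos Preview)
Your proposal is correct and coincides with the paper's approach: the paper gives no independent proof but simply records the statement as the specialization of \cite[Theorem 4.4]{Barb10} to the m-accretive case $\omega=0$, and your sketch is exactly that argument spelled out (mild solution via B\'enilan's integral formulation, Lipschitz continuity from $f\in W^{1,1}$ and the comparison estimate, a.e.\ differentiability via the Radon--Nikodym property of $X$, and identification through maximal monotonicity).
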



\noindent As a consequence of Proposition \ref{prop:inhomo}, for all $u\in W^{1,1}([0,t_f],U)$ and $x_0\in \dom{M}$ 
there exists a unique strong solution $x\in W^{1,\infty}([0,t_f],X)$ of \eqref{eq:monotone_phs}. Furthermore, this strong solution fulfills for almost every $t\in[0,t_f]$ the \emph{power balance equation}
\begin{align}    
\label{eq:dissip_ineq}
\tfrac12\tfrac{\mathrm{d}}{\mathrm{d}t}\|x(t)\|^2= - \langle x(t),M(x(t))\rangle+\langle u(t),y(t)\rangle_U.
\end{align}

\medskip 

\noindent \textbf{Shifted passivity and stability.} In the following, we are interested in shifted passivity with respect to \emph{steady state pairs} of \eqref{eq:monotone_phs}, i.e.\ $(\overline{x},\overline{u})\in D(M)\times U$ such that $-M(\overline{x})+B\overline{u}=0$. 

\begin{prop}
\label{prop:shifted_passive}
Consider a maximally monotone pH system of the form~\eqref{eq:monotone_phs} with steady state pair $(\overline{x},\overline{u})$ and set $\overline{y} = B^*\overline{x}$. Then, for all $u\in W^{1,1}([0,t_f],U)$ and for almost every $t\in[0,t_f]$, the shifted power balance and shifted passivity equality 
\begin{align*}
\tfrac12\tfrac{\mathrm{d}}{\mathrm{d}t}\|x(t)-\overline{x}\|^2&=-\langle x(t)-\overline{x},M(x(t))-M(\overline{x})\rangle+\langle u(t)-\overline{u},y(t)-\overline{y}\rangle_U\\
&\leq \langle u(t)-\overline{u},y(t)-\overline{y}\rangle_U
\end{align*}
hold.
\end{prop}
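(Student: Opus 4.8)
The plan is to derive the stated identity directly from the dynamics of \eqref{eq:monotone_phs} and then invoke accretivity for the inequality; in fact the cleanest route is to reduce everything to the power balance equation \eqref{eq:dissip_ineq} that was already established for strong solutions. First I would record that, by the consequence of Proposition~\ref{prop:inhomo}, the hypotheses $u\in W^{1,1}([0,t_f],U)$ and $x_0\in D(M)$ guarantee a unique strong solution $x\in W^{1,\infty}([0,t_f],X)$ of \eqref{eq:monotone_phs}, so that $t\mapsto x(t)$ is differentiable with $\dot x(t)=-M(x(t))+Bu(t)$ for a.e.\ $t$; since $x$ is $W^{1,\infty}$-valued in a Hilbert space, $t\mapsto \tfrac12\|x(t)-\overline{x}\|^2$ is absolutely continuous with $\tfrac12\tfrac{\mathrm d}{\mathrm dt}\|x(t)-\overline{x}\|^2=\langle x(t)-\overline{x},\dot x(t)\rangle$ for a.e.\ $t$ (the same chain/product rule used to obtain \eqref{eq:dissip_ineq}).

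Next I would substitute the steady state relation $B\overline{u}=M(\overline{x})$ into the dynamics. Writing $z(t):=x(t)-\overline{x}$, one gets for a.e.\ $t$
\[
\dot z(t) = -M(x(t))+Bu(t) = -\bigl(M(x(t))-M(\overline{x})\bigr) + B\bigl(u(t)-\overline{u}\bigr).
\]
Taking the inner product with $z(t)$ and using $\langle z(t),B(u(t)-\overline{u})\rangle = \langle B^*z(t),u(t)-\overline{u}\rangle_U = \langle y(t)-\overline{y},u(t)-\overline{u}\rangle_U$ yields exactly the claimed shifted power balance equality. The inequality is then immediate from the accretivity \eqref{eq:M_dissip} of $M$, which makes $\langle x(t)-\overline{x},M(x(t))-M(\overline{x})\rangle\geq 0$, hence its negative is $\le 0$.

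Alternatively — and this is perhaps the most economical bookkeeping — I would observe that the shifted operator $\widetilde M(z):=M(z+\overline{x})-M(\overline{x})$ with domain $D(\widetilde M)=D(M)-\overline{x}$ is again m-accretive: accretivity is inherited verbatim, and $\ran(\lambda I+\widetilde M)=\ran(\lambda I+M)-\lambda\overline{x}-M(\overline{x})=X$. Then $z(\cdot)$ is the unique strong solution of the maximally monotone pH system with data $(\widetilde M,B,x_0-\overline{x},u-\overline{u})$ and output $B^*z=y-\overline{y}$, so applying the already-proven power balance \eqref{eq:dissip_ineq} to this shifted system produces the stated equality at once, and \eqref{eq:M_dissip} gives the inequality.

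I do not anticipate a genuine obstacle here: the argument is essentially the one behind \eqref{eq:dissip_ineq}, now applied around the steady state. The only two points deserving a word of care are the product/chain rule for $t\mapsto\|x(t)-\overline{x}\|^2$, which is licensed by $x\in W^{1,\infty}([0,t_f],X)$, and the routine verification that the shift $z\mapsto z+\overline{x}$ preserves m-accretivity; both are short.
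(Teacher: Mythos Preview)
Your proposal is correct and follows essentially the same argument as the paper: subtract the steady-state relation to obtain $\dot z = -(M(x)-M(\overline{x})) + B(u-\overline{u})$, take the inner product with $z=x-\overline{x}$, and use accretivity for the inequality. Your alternative route via the shifted operator $\widetilde M$ is just a repackaging of the same computation.
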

\begin{proof}
    As $(\bar x,\bar u)\in \dom{M}\times U$ is a steady state pair, i.e., $\changed{-}M(\bar x) + B\bar u = 0$ and $\tfrac{\mathrm{d}}{\mathrm{d}t} \bar x = 0$, we have
    \begin{align*}
        \tfrac{\mathrm{d}}{\mathrm{d}t} (x(t)-\bar x) = \changed{-}M(x(t)) \changed{+} M(\bar x) + B (u(t)-\bar u).
    \end{align*}
    Thus, the shifted power balance follows from
    \begin{align*}
        \tfrac12 \tfrac{\mathrm{d}}{\mathrm{d}t}\|x(t)-\bar x\|^2 &= \langle x(t)-\bar x,\changed{-}M(x(t)) \changed{+} M(\bar x) + B(u(t) - \bar u)\rangle \\
        &= -\langle x(t)-\bar x, M(x(t)) - M(\bar x)\rangle + \langle B^*(x(t)-\bar x),u(t)-\bar u\rangle_U.
    \end{align*}
    Due to accretivity of $M$, i.e., \eqref{eq:M_dissip}, $-\langle x(t)-\bar x, M(x(t)) - M(\bar x)\rangle \leq 0$, which implies the passivity inequality.
\end{proof}
\noindent \changed{In the following we study uniqueness of steady states and corresponding asymptotic stability if $M$ is \emph{strongly accretive}.
\begin{defn}
An operator $M:X \supseteq D(M)\to X$ is called \emph{strongly accretive at $\overline{x}\in D(M)$}, if there exists $c_M>0$ such that 
\begin{align}
\label{eq:exp_stable}
\langle x-\overline{x},M(x)-M(\overline{x})\rangle \geq c_M\|x-\overline{x}\|^2\quad \text{for all $x\in X$,}
\end{align}
\end{defn}
\noindent For a strongly accretive operator $M$, exponential convergence towards a steady state $\overline{x}$ with a corresponding input $\overline{u}$ can be concluded from Proposition~\ref{prop:shifted_passive} and the Gronwall lemma.}

However, this strong accretivity assumption is only sufficient and not necessary, and in particular too restrictive for our desired application to the primal-dual gradient method. To weaken this assumption, we will consider structured accretive operators $M$ given by
\begin{align}
\begin{split}
\dom{M} &:= \dom{\mathcal{M}_2}\oplus \dom{\mathcal{M}_2^*}\subseteq X_1\oplus X_2,\\
(x_1,x_2)\mapsto M(x_1,x_2)&=\begin{bmatrix}
    \mathcal{M}_1(x_1)-\mathcal{M}_2^*x_2 \\\mathcal{M}_2x_1
\end{bmatrix},\quad (x_1,x_2)\in \dom{M},
\end{split}
\label{def:m_m1_m2}
\end{align}
where $\mathcal{M}_1:X_1\rightarrow X_1$ is bounded and m-accretive 
and $\mathcal{M}_2:X_1\supset D(\mathcal{M}_2)\rightarrow X_2$ is linear, closed and densely defined. 

The following result provides insight into the stability properties of \eqref{eq:monotone_phs} when governed by block operators as introduced above.

\begin{prop}
\label{prop:convergence}
Let $M$ given by \eqref{def:m_m1_m2} satisfy the following conditions:
\begin{itemize}
    \item[(i)] $M$ is m-accretive and invertible;
    \item[(ii)] $\mathcal{M}_1$ is strongly accretive at $\overline{x}_1\in X_1$ and differentiable.
\end{itemize}
Then the following assertion
\begin{itemize}
    \item[(a)] For any $\overline{u}\in U$ there is a unique steady state of \eqref{eq:monotone_phs} defined by $\overline{x}=M^{-1}(B\overline{u})$.
\end{itemize}
For the remaining statements, we consider a constant input $u(t)\equiv \overline{u}\in U$ \changed{and} the steady state $\overline{x}=M^{-1}(B\overline{u})$.
\begin{itemize}
    \item[(b)] The steady state $\overline{x}$ is locally exponentially stable for the dynamics \eqref{eq:monotone_phs} in the following sense: There exists $\delta>0$ and $c>0$ such that for all $x^0 = (x_1^0,x_2^0)\in D(M)$ with $\|x^0_2-\overline{x}_2\|<\delta$ the strong solution fulfills for all $t\geq 0$ 
    \begin{align}
    \label{eq:exp_stable_m1m2}
    \left\| x(t)-\overline{x}
    \right\|
    \leq \alpha e^{-ct}
    \end{align}
    with $\alpha > 0$ depending only on $\|x^0\|$.
    \item[(c)] The steady state $\overline{x}$ 
    is globally asymptotically stable for \eqref{eq:monotone_phs}, i.e., its strong solution satisfies for all initial values $x_0\in D(M)$ $$x(t) = (x_1(t),x_2(t)) \stackrel{t\to \infty}{\to}(\overline{x}_1,\overline{x}_2) = \overline{x}.$$   
    \item[(d)] If $\mathcal{M}_1$ is linear, then the exponential stability from (b) holds globally, i.e., for all $\delta > 0$.
\end{itemize} 
\end{prop}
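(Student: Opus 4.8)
The plan is to handle the four assertions in order, with (b) carrying essentially all the work. Assertion (a) is immediate from (i): a steady state of \eqref{eq:monotone_phs} with input $\bar u$ is exactly a solution of $M(\bar x)=B\bar u$, and invertibility of $M$ makes $\bar x=M^{-1}(B\bar u)$ the unique such point. For (b) I would pass to the deviation $z:=x-\bar x=(z_1,z_2)$, which by the steady-state identity solves $\dot z=-Az-R(z)$ with
\[
A:=DM(\bar x)=\begin{bmatrix} D\mathcal M_1(\bar x_1) & -\mathcal M_2^*\\ \mathcal M_2 & 0\end{bmatrix}\ \text{on}\ D(\mathcal M_2)\oplus D(\mathcal M_2^*),\qquad R(z)=\begin{bmatrix} r(z_1)\\ 0\end{bmatrix},
\]
where $r(z_1):=\mathcal M_1(x_1)-\mathcal M_1(\bar x_1)-D\mathcal M_1(\bar x_1)z_1$ satisfies $\|r(z_1)\|=o(\|z_1\|)$ as $\|z_1\|\to0$, by differentiability of $\mathcal M_1$.

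Next I would establish that $-A$ generates an exponentially stable $C_0$-semigroup $(S(t))_{t\ge0}$, $\|S(t)\|\le Ce^{-\mu t}$. The operator $A$ is a bounded accretive perturbation $\operatorname{diag}(D\mathcal M_1(\bar x_1),0)$ of the skew-adjoint block $\begin{bmatrix}0&-\mathcal M_2^*\\\mathcal M_2&0\end{bmatrix}$; accretivity of $D\mathcal M_1(\bar x_1)$ follows by letting $t\downarrow0$ in $\langle\mathcal M_1(\bar x_1+th)-\mathcal M_1(\bar x_1),th\rangle\ge0$, and strong accretivity of $\mathcal M_1$ at $\bar x_1$ upgrades this to $\langle D\mathcal M_1(\bar x_1)v_1,v_1\rangle\ge c_M\|v_1\|^2$. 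Hence $A$ is $m$-accretive, being the sum of the $m$-accretive skew-adjoint block and a bounded accretive operator, so $-A$ generates a contraction semigroup by Lumer--Phillips. Moreover, invertibility of $M$ forces $\ker\mathcal M_2^*=\{0\}$ (a nonzero $v_2\in\ker\mathcal M_2^*$ would make $(\bar x_1,\bar x_2)$ and $(\bar x_1,\bar x_2+v_2)$ two preimages of $B\bar u$ under $M$). These coercivity and kernel conditions are precisely the structural hypotheses under which the block-operator stability criterion of \cite{GernHins22} yields exponential stability of $(S(t))_{t\ge0}$.

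It remains to transfer exponential stability from $-A$ to the nonlinear flow. Proposition~\ref{prop:shifted_passive} applied with the constant input $\bar u$, together with strong accretivity of $\mathcal M_1$, gives
\[
\tfrac12\tfrac{\mathrm d}{\mathrm dt}\|z(t)\|^2=-\langle z_1(t),\mathcal M_1(x_1(t))-\mathcal M_1(\bar x_1)\rangle\le-c_M\|z_1(t)\|^2\le0,
\]
so $t\mapsto\|z(t)\|$ is non-increasing and $z_1\in L^2(0,\infty;X_1)$. I would then bootstrap on the Duhamel identity $z(t)=S(t)z^0-\int_0^tS(t-s)R(z(s))\,\mathrm ds$: since $\|z(t)\|$ never increases, the trajectory stays in a ball on which $\|r(z_1)\|\le\varepsilon\|z_1\|$ for any prescribed $\varepsilon>0$ once the initial deviation is small enough, and choosing $\varepsilon$ small relative to $\mu,C$ lets a Gronwall argument absorb the perturbation, producing $\|z(t)\|\le\alpha e^{-ct}$ with $c=c(\mu,C,\varepsilon)$ and $\alpha=\alpha(\|z^0\|)$. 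The main obstacle is the quantitative bookkeeping here: identifying the admissible neighborhood, and in particular arguing that smallness of $\|z_2^0\|$ alone is enough because the global strong accretivity of $\mathcal M_1$ at $\bar x_1$ controls the possibly large $z_1$-component. This is exactly where the block structure in \eqref{def:m_m1_m2}, rather than mere $m$-accretivity of $M$, is indispensable.

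Finally, (d) is immediate: if $\mathcal M_1$ is linear then $M$ itself is linear, $R\equiv0$, $A=M$, and the flow coincides with the exponentially stable semigroup $(S(t))_{t\ge0}$, so \eqref{eq:exp_stable_m1m2} holds for every $\delta>0$. For (c) I would argue by a LaSalle-type invariance principle: $\|z(t)\|$ is non-increasing, hence convergent to some $\ell\ge0$, and $z_1\in L^2$; any $\omega$-limit point $w$ of the orbit must have $w_1=\bar x_1$ since the Lyapunov derivative vanishes only when $z_1=0$, and invariance of the $\omega$-limit set together with the equation then forces $w_2\in\ker\mathcal M_2^*=\{0\}$, so $w=\bar x$ and $\ell=0$. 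The delicate point in infinite dimensions is the precompactness of the orbit underlying the invariance argument, which one extracts from the decomposition of $M$ into a bounded nonlinearity plus the skew-adjoint block.
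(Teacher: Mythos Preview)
Your treatment of (a), (b), and (d) is essentially the paper's: linearize at $\bar x$, show $D\mathcal{M}_1(\bar x_1)$ is strongly accretive (your limit argument $\langle t^{-1}(\mathcal M_1(\bar x_1+th)-\mathcal M_1(\bar x_1)),h\rangle\ge c_M\|h\|^2$ is in fact cleaner than the paper's), invoke \cite{GernHins22} for exponential stability of the linearized semigroup, and transfer this to the nonlinear flow. The paper cites \cite[Corollary~2.2]{Kato95} for the transfer while you spell out the underlying Duhamel--Gronwall argument; these amount to the same thing. One minor point: the paper uses that $\mathcal{M}_2$ is \emph{surjective} (immediate from surjectivity of $M$ by reading the second block row), not merely $\ker\mathcal{M}_2^*=\{0\}$; check which hypothesis the cited block-operator criterion actually needs.

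The genuine gap is in (c). You propose a LaSalle-type argument and correctly flag precompactness of the orbit as ``the delicate point'', but your proposed remedy does not deliver it: the skew-adjoint block generates an isometric group with no smoothing, so decomposing $M$ into a bounded nonlinearity plus a skew-adjoint part gives only boundedness of the orbit---which you already have from the non-increasing norm---and no relative compactness. In infinite dimensions bounded orbits need not possess $\omega$-limit points, so the invariance argument cannot be closed. The paper sidesteps compactness entirely. Since $-DM(\bar x)$ generates an exponentially stable semigroup, there is a bounded positive $P$ solving the operator Lyapunov equation $\langle DM(\bar x)h,Ph\rangle+\langle Ph,DM(\bar x)h\rangle=-\|h\|^2$. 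Along the \emph{nonlinear} trajectory one has $\tfrac{\mathrm d}{\mathrm dt}\langle Ph,h\rangle\le -\|h\|^2+2\|P\|\,\|r(h_1)\|\,\|h\|$, and because $h_1(t)\to 0$ exponentially for \emph{every} initial datum, the remainder is eventually dominated; integrating yields $\int_0^\infty\|h(\tau)\|^2\,\mathrm d\tau<\infty$, contradicting $\|h_2(t)\|\ge\varepsilon$ and forcing the trajectory into the basin of (b). The idea you are missing is to use the \emph{linearized} Lyapunov function on the \emph{nonlinear} flow, letting the already-established global decay of $h_1$ absorb the remainder for large $t$.
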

\begin{proof}
Since $M$ is m-accretive and $B\overline{u}\in W^{1,1}([0,\infty),X)$ \changed{due to $\overline{u}\in U$ being constant in time}, it follows from   Proposition~\ref{prop:inhomo} that the solution $x\in W^{1,\infty}([0,\infty),X)$ of \eqref{eq:monotone_phs} is unique. Furthermore, since $M$ is invertible, it follows that $\overline{x}=M^{-1}(B\overline{u})$ is the unique solution of $0=-M(\overline{x})+B\overline{u}$. This proves assertion~(a). 

To prove the remaining statements, we assume that $u\equiv \overline{u}\in U$ and the shifted states are denoted by \[h=x-\overline{x},\quad \text{and}\quad h_i=x_i-\overline{x}_i,\, i=1,2.\] 
For the strong solutions, we have for almost all $t\geq 0$
\begin{align*}
\frac{1}{2}\frac{\mathrm{d}}{\mathrm{d}t}\left\|\begin{bmatrix}
    h_1\\  h_2
\end{bmatrix}\right\|^2=\left\langle -M(x_1,x_2)+B\overline{u},\begin{bmatrix}
    h_1\\ h_2
\end{bmatrix}\right\rangle
&=\left\langle -M(x_1,x_2)+M(\overline{x}),\begin{bmatrix}
    h_1\\ h_2
\end{bmatrix}\right\rangle\\&=\left\langle \begin{bmatrix}
    \mathcal{M}_1(\overline{x}_1)-\mathcal{M}_1(x_1)+\mathcal{M}_2^*h_2\\ -\mathcal{M}_2h_1
\end{bmatrix},\begin{bmatrix}
    h_1\\ h_2
\end{bmatrix}\right\rangle\\&=\langle \mathcal{M}_1(\overline{x}_1)-\mathcal{M}_1(x_1), h_1\rangle_{X_1}\\&\leq -c_1\| h_1\|_{X_1}^2\leq 0,
\end{align*}
\changed{where the final step uses (ii).}
Hence the norm is non-increasing. Furthermore, integration results in 
\begin{align*}
\|h_1(t)\|_{X_1}^2-\left\|\begin{bmatrix}
    h_1(0)\\ h_2(0)
\end{bmatrix}\right\|^2\leq \left\|\begin{bmatrix}
    h_1(t)\\ h_2(t)
\end{bmatrix}\right\|^2-\left\|\begin{bmatrix}
    h_1(0)\\ h_2(0)
\end{bmatrix}\right\|^2\leq - 2\int_0^tc_1\|h_1(\tau)\|_{X_1}^2\mathrm{d}\tau,
\end{align*}
which holds for all $t\geq 0$. Applying the Gronwall Lemma from \cite[Lemma 4.1]{GrueJ15} leads to the estimate
\begin{align}
\label{eq:x1_exp_stable}
    \|h_1(t)\|_{X_1}\leq \left\|\begin{bmatrix}
h_1(0)\\ h_2(0)        
    \end{bmatrix}\right\|e^{-c_1t},\quad \forall\,t\geq 0.
\end{align}
This implies that $\|h_1(t)\|_{X_1}\rightarrow0$ as $t \rightarrow\infty$ and as $t\mapsto\left\|\sbvek{h_1(t)}{h_2(t)}\right\|$ is non-increasing, it follows that the function $t\mapsto\|h_2(t)\|_{X_2}$ is bounded on $[0,\infty)$.

We proceed by showing the local exponential stability of the origin claimed in (b). Using the derivative, we find 
\begin{align}
\label{eq:lin_approx}
M(\overline{x}+h)=M(\overline{x})+\mathrm{D}M(\overline{x})h+r(h_1), \quad \mathrm{D}M(\overline{x})h=\begin{bmatrix}
    \mathrm{D}\mathcal{M}_1(\overline{x}_1) &-\mathcal{M}_2^*\\ \mathcal{M}_2&0
\end{bmatrix}\begin{bmatrix}
    h_1\\ h_2
\end{bmatrix},
\end{align}
where $r$ satisfies $\frac{\|r(h_1)\|}{\|h_1\|_{X_1}}\rightarrow 0$ as $\|h_1\|_{X_1}\rightarrow 0$. This implies \changed{using skew-symmetry of the off-diagonal part of $\mathcal{M}$}, that
\begin{align}
\langle h_1,\mathrm{D}\mathcal{M}_1(\overline{x}_1)h_1+r(h_1)\rangle_{X_1}&=\langle h,\mathrm{D}M(\overline{x})h+r(h_1)\rangle\nonumber\\
&=\langle h, M(\overline{x}+h)-M(\overline{x})\rangle\nonumber\\
&=\langle h_1,\mathcal{M}_1(\overline{x}_1+h_1)-\mathcal{M}_1(\overline{x}_1)\rangle_{X_1}\geq c_1\|h_1\|_{X_1}^2.\label{eq:laststep}
\end{align}
Let $h_1$ be small enough such that $\tfrac{\|r(h_1)\|}{\|h_1\|_{X_1}}\leq\tfrac{c_1}{2}$ and set $z=\tfrac{h_1}{\|h_1\|_{X_1}}\in X_1$, then $\changed{\|z\|_{X_1}}=1$. Therefore we have for all $z\in X_1$ with $\|z\|_{X_1}=1$
\[
\langle z,\mathrm{D}\mathcal{M}_1(\overline{x}_1)z\rangle_{X_1}=\langle \tfrac{h_1}{\|h_1\|_{X_1}},\mathrm{D}\mathcal{M}_1(\overline{x}_1)\tfrac{h_1}{\|h_1\|_{X_1}}\rangle_{X_1}\geq c_1-\langle \tfrac{h_1}{\|h_1\|_{X_1}},\tfrac{r(h_1)}{\|h_1\|_{X_1}}\rangle_{X_1}\geq \tfrac{c_1}{2},
\]
\changed{where we used \eqref{eq:laststep}.}
Hence, $\mathrm{D}\mathcal{M}_1$ is strongly accretive as for any $z\in X_1\setminus\{0\}$,
\begin{align*}
    \langle z,\mathrm{D}\mathcal{M}_1(\overline{x}_1)z\rangle_{X_1} = \|z\|_{X_1}^2\langle \tfrac{z}{\|z\|_{X_1}}, \mathrm{D}\mathcal{M}_1(\overline{x}_1)\tfrac{z}{\|z\|_{X_1}}\rangle_{X_1} \geq \tfrac{c_1}{2}\|z\|_{X_1}^2.
\end{align*}
Furthermore, as $M$ is invertible, $\mathcal{M}_2$ is necessarily surjective and therefore, it follows from \cite[Proposition 2.9]{GernHins22} that $\mathrm{D}M(\overline{x})$ generates an exponentially stable semigroup. Then \cite[Corollary 2.2]{Kato95} implies that the origin is a locally exponentially stable equilibrium for \eqref{eq:mono_cauchy}, that is, (b). Further, the claim (d) is also immediately clear as in case of linearity of $\mathcal{M}_1$ (and hence of $M$), the remainder term satisfies $r\equiv 0$.

To see (c), we will show that the trajectory $t\mapsto(h_1(t),h_2(t))$ reaches any neighborhood of the origin, i.e., in particular the neighborhood on which we have the local exponential stability from (b). Since we already have $x_1(t)\rightarrow \overline{x}_1$, we remain to show that for all $\varepsilon>0$ there exists $t\geq 0$ such that $\|h_2(t)\|_{X_2}<\varepsilon$ holds. Towards showing a contradiction, we assume that there exists $\varepsilon>0$ such that for all $t\geq 0$ one has $\|h_2(t)\|_{X_2}\geq\varepsilon$.  We consider the Lyapunov function of the linearized system given by a positive solution $P\in L(X)$ of the Lyapunov equation
\[
\langle \mathrm{D}M(\overline{x})h,Ph\rangle+\langle Ph,\mathrm{D}M(\overline{x})h\rangle=-\langle h,h\rangle,
\]
which exists due to \cite[Theorem 5.1.3]{CurtZwar95}. 
Hence, using \eqref{eq:x1_exp_stable} and \eqref{eq:lin_approx}, we can choose $t>0$ sufficiently large such that 
$\|r(h_1(t))\|\leq (4\|P\|)^{-1}\|h_1(t)\|_{X_1}$,
and we estimate for sufficiently large $t$
\begin{align*}
\tfrac{\mathrm{d}}{\mathrm{d}t}\langle Ph(t),h(t)\rangle &= \langle Ph(t),\dot x(t)\rangle+\langle \dot x(t),Ph(t)\rangle\\&=\langle Ph(t),\mathrm{D}M(\overline{x})h(t)+r(h_1(t))\rangle+\langle \mathrm{D}M(\overline{x})h(t)+r(h_1(t)),Ph(t)\rangle
\\&\leq \langle Ph(t),\mathrm{D}M(\overline{x})h(t)\rangle+\langle \mathrm{D}M(\overline{x})h(t),Ph(t)\rangle + 2\|P\|\|r(h_1(t))\|\|h_1(t)\|_{X_1}
\\&\leq -\frac{1}{2}\|h(t)\|^2.
\end{align*}
Integrating the above estimte and using that $t\mapsto\|h(t)\|$ is non-increasing on $[0,\infty)$ as shown in the first part, this implies
\[
\int_0^\infty\|h(\tau)\|^2{\rm d}\tau<\infty.
\]
This in combination with $\|h_2(t)\|_{X_2}\geq \varepsilon$ for all $t\geq 0$ leads to the desired contradiction.
\end{proof}

Next, we prove the closedness of the presented class of nonlinear monotone pH systems. 
\begin{prop}
\label{prop:coupling}
Consider two monotone pH systems of the form \eqref{eq:monotone_phs} given by $(M_1,B_1)$ and $(M_2,B_2)$ in the Hilbert spaces $X_1$ and $X_2$, \changed{with inputs and outputs  $u_i,y_i\in U$, $i=1,2$.} Further, let a decomposition of the inputs $u_i=\begin{smallbmatrix}u_i^1\\ u_i^2\end{smallbmatrix}\in U=U_1\times U_2$ and outputs $y_i=\begin{smallbmatrix}y_i^1 \\ y_i^2\end{smallbmatrix}\in U_1\times U_2$ and  $B_i=\begin{smallbmatrix} B_i^1 & B_i^2\end{smallbmatrix}$, $i=1,2$ be given. Then, for \changed{bounded and linear} $\mathcal{F}:U_2\rightarrow U_1$, the interconnection 
\begin{align*}
\begin{bmatrix}
        0 &\mathcal{F}\\
        -\mathcal{F}^* & 0
    \end{bmatrix}    \begin{bmatrix}
        y_1^1\\
        y_2^1
    \end{bmatrix} = 
    \begin{bmatrix}
        u_1^1\\
        u_2^1
    \end{bmatrix}
\end{align*}
leads again to a monotone pH system given by
\begin{align}
\label{eq:coupled_ph}
\begin{split}
\begin{bmatrix}\dot x_1\\ \dot x_2\end{bmatrix}&=\underbrace{\begin{bmatrix}
    M_1(x_1)\\M_2(x_2)
\end{bmatrix}+\begin{bmatrix} 0& B_1^1\mathcal{F}(B_2^1)^*\\-B_2^1\mathcal{F}^*(B_1^1)^*&0\end{bmatrix}\begin{bmatrix} x_1\\ x_2\end{bmatrix}}_{=:M(x_1,x_2)} + \begin{bmatrix}B_1^2&0\\0&B_2^2\end{bmatrix}\begin{bmatrix}u_1^2\\ u_2^2\end{bmatrix}\\ \begin{bmatrix}y_1^2\\y_2^2\end{bmatrix}&=\begin{bmatrix}(B_1^2)^*&0\\0&(B_2^2)^*\end{bmatrix}\begin{bmatrix} x_1\\ x_2\end{bmatrix}.
\end{split}
\end{align}
Furthermore, if the pH systems given by $(M_1,B_1)$ and $(M_2,B_2)$ are maximally monotone, then the interconnected system is a~maximally monotone pH system. 
Moreover, if $M_1$ and $M_2$ are strongly accretive at $\overline{x}_1\in X_1$ and $\overline{x}_2\in X_2$, respectively, then $M$ corresponding to the interconnected system \eqref{eq:coupled_ph} is strongly accretive at $\bar{x} = (\overline{x}_1,\overline{x}_2)\in X_1\times X_2$. Therefore if $(u_1^2,u_2^2)=0$, then $(x_1(t),x_2(t))\rightarrow (\overline{x}_1,\overline{x}_2)$ exponentially as $t\rightarrow\infty$ for any strong solution of \eqref{eq:coupled_ph}.
\end{prop}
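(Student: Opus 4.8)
The plan is to recognise the governing operator of the interconnected system as a bounded skew-adjoint perturbation of the diagonal monotone pH system built from $M_1$ and $M_2$, and then to deduce each assertion from the corresponding property of the two subsystems, using repeatedly that a skew-adjoint term vanishes in the quadratic form $\langle\,\cdot\,,\,\cdot\,\rangle$. First I would substitute the interconnection into the two subsystem equations: with $y_i^1=(B_i^1)^*x_i$, the relation $\left[\begin{smallmatrix}0&\mathcal F\\-\mathcal F^*&0\end{smallmatrix}\right]\left[\begin{smallmatrix}y_1^1\\y_2^1\end{smallmatrix}\right]=\left[\begin{smallmatrix}u_1^1\\u_2^1\end{smallmatrix}\right]$ yields $u_1^1=\mathcal F(B_2^1)^*x_2$ and $u_2^1=-\mathcal F^*(B_1^1)^*x_1$; inserting these together with $y_i^2=(B_i^2)^*x_i$ into $\dot x_i=-M_i(x_i)+B_i^1u_i^1+B_i^2u_i^2$ produces the coupled dynamics~\eqref{eq:coupled_ph}, whose governing operator has the block form $M=M_0+S$ with $M_0:=\operatorname{diag}(M_1,M_2)$ on $D(M_1)\times D(M_2)$ and $S:=\left[\begin{smallmatrix}0&B_1^1\mathcal F(B_2^1)^*\\-B_2^1\mathcal F^*(B_1^1)^*&0\end{smallmatrix}\right]$.

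The operator $S$ is bounded, since $\mathcal F$ and all $B_i^j$ are, and it is skew-adjoint, because $(B_1^1\mathcal F(B_2^1)^*)^*=B_2^1\mathcal F^*(B_1^1)^*$ shows $S^*=-S$; in particular $\langle Sv,v\rangle=0$ for every $v\in X_1\times X_2$. Hence, for all $x,x'\in D(M)$,
\[
\langle M(x)-M(x'),x-x'\rangle=\langle M_1(x_1)-M_1(x_1'),x_1-x_1'\rangle_{X_1}+\langle M_2(x_2)-M_2(x_2'),x_2-x_2'\rangle_{X_2}\ge 0,
\]
so $M$ is accretive and \eqref{eq:coupled_ph} is a monotone pH system with bounded input operator $\operatorname{diag}(B_1^2,B_2^2)$ and the stated output map.

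For maximal monotonicity I would establish $\ran(\lambda I+M)=X_1\times X_2$ for $\lambda>\|S\|$. Since $M_1,M_2$ are m-accretive, so is $M_0$, hence $(\lambda I+M_0)^{-1}$ is defined on all of $X_1\times X_2$ and is $\tfrac1\lambda$-Lipschitz; the equation $(\lambda I+M_0+S)x=y$ is then equivalent to the fixed-point problem $x=(\lambda I+M_0)^{-1}(y-Sx)$, whose right-hand side is a strict contraction once $\lambda>\|S\|$, so Banach's fixed point theorem provides a solution. (Alternatively one may invoke the classical perturbation result that an m-accretive operator plus an everywhere-defined bounded accretive operator is again m-accretive.) Together with accretivity, this gives m-accretivity of $M$, i.e.\ \eqref{eq:coupled_ph} is maximally monotone.

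Finally, if $\langle x_i-\overline x_i,M_i(x_i)-M_i(\overline x_i)\rangle\ge c_i\|x_i-\overline x_i\|^2$ for all $x_i\in X_i$, then, since $S$ is linear with $\langle S(x-\overline x),x-\overline x\rangle=0$, the computation above gives $\langle x-\overline x,M(x)-M(\overline x)\rangle\ge\min\{c_1,c_2\}\,\|x-\overline x\|^2$, so $M$ is strongly accretive at $\overline x=(\overline x_1,\overline x_2)$. Taking $(u_1^2,u_2^2)=0$, so that $(\overline x,0)$ is the steady state pair of \eqref{eq:coupled_ph}, Proposition~\ref{prop:shifted_passive} gives $\tfrac12\tfrac{\mathrm d}{\mathrm dt}\|x(t)-\overline x\|^2\le-\min\{c_1,c_2\}\,\|x(t)-\overline x\|^2$, and Gronwall's lemma yields $\|x(t)-\overline x\|\le e^{-\min\{c_1,c_2\}\,t}\|x_0-\overline x\|$ for every strong solution, exactly as in the discussion around~\eqref{eq:exp_stable}. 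The only step with real content is the surjectivity in the m-accretivity claim; all the rest follows the moment one observes that the coupling term is a bounded skew-adjoint perturbation, which drops out of every relevant bilinear form. Two minor points require care: bookkeeping the splittings $u_i=(u_i^1,u_i^2)$, $B_i=[B_i^1\ B_i^2]$ and the associated adjoints consistently, and confirming that $(\overline x,0)$ is genuinely a steady state pair of \eqref{eq:coupled_ph} so that Proposition~\ref{prop:shifted_passive} applies with $\overline x$ as the target equilibrium.
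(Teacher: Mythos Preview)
Your proof is correct and follows essentially the same route as the paper: both identify the coupling as a bounded skew-adjoint perturbation $S$ of $\operatorname{diag}(M_1,M_2)$, use $\langle Sv,v\rangle=0$ to get accretivity and strong accretivity, and conclude exponential convergence via Proposition~\ref{prop:shifted_passive} and Gronwall. The only stylistic difference is in the m-accretivity step, where the paper directly invokes the Barbu perturbation theorem (your parenthetical alternative), whereas you give the Banach fixed-point argument for $\lambda>\|S\|$ explicitly; both are equivalent here, and your explicit flagging of the steady-state assumption on $\overline x$ is a point the paper leaves implicit.
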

\begin{proof}
To see that the interconnected pH system is monotone, one easily checks \eqref{eq:M_dissip}. The maximality can be treated using a perturbation argument from \cite[Theorem 3.1]{Barb10}: If $\mathcal{A}$ is m-accretive in $X$ and $\mathcal{B}:X\rightarrow X$ is bounded and m-accretive, then $\mathcal{A}+\mathcal{B}$ is m-accretive. We want to apply this result to $$\mathcal{A}=\begin{bmatrix}
M_1\\ M_2   
\end{bmatrix} \quad \mathrm{and}\quad \mathcal{B}=\begin{bmatrix}
    0& B_1^1\mathcal{F}(B_2^1)^*\\ -B_2^1\mathcal{F}^*(B_1^1)^*&0
\end{bmatrix}.$$ 
To this end, we note that $\mathcal{B}$ is skew-adjoint and therefore accretive and $(-\infty,0]\subseteq\rho(\mathcal{B})$ holds. Hence $\lambda I+\mathcal{B}$ is surjective for all $\lambda >0$ which means that $\mathcal{B}$ is m-accretive.

If $M_1$ and $M_2$ \changed{are} strongly accretive, then due to the skew-symmetric interconnection, we have
\[
\langle M(x_1,x_2),(x_1,x_2)\rangle=\langle M_1(x_1),x_1\rangle+\langle M_2(x_2),x_2\rangle\geq \min\{c_{M_1},c_{M_2}\}\left\|\begin{bmatrix}
    x_1\\ x_2
\end{bmatrix}\right\|^2,
\]
and therefore the operator $M$ is strongly accretive. Thus, convergence follows from Proposition~\ref{prop:shifted_passive} and the Gronwall lemma.
\end{proof}

\section{Infinite-dimensional optimizer dynamics as a monotone pH system}
\label{sec:optimizer_dynamics}
\noindent In this part, we will first show that the continuous-time primal-dual method for continuous-time optimal control problems falls into the class of monotone pH systems presented in the previous section. Then, we endow the system with suitable port variables which serves as a foundation for optimization-based control by interconnection. 

We consider optimal control problems on finite time horizons $t_f>0$ subject to linear ordinary differential equations (ODE) of the form
\begin{align}\label{eq:oc_with_dynamics}
\begin{split}
        \min_{(x,u)\in H^1([0,t_f],\R^n)\times L^2([0,t_f],\R^m)} J(x,u) \quad \mathrm{s.t.}\quad \dot x(\tau) = Ax(\tau) + Bu(\tau)+f(\tau),\quad x(0)=x^0.
\end{split}
\end{align}
Here, $A\in\R^{n\times n}$, $B\in \R^{n\times m}$, $f\in L^2([0,t_f],\R^n)$, $x^0\in \R^n$ and the cost functional is defined by 
\begin{align}
    \label{def:J}
J: L^2([0,t_f],\R^n)\times L^2([0,t_f],\R^m)\rightarrow\R,\   J(x,u):= \int_0^{t_f} \ell_x(x(\tau))+\frac{\alpha}{2}\|u(\tau)\|^2\,\mathrm{d}\tau
\end{align}
where $\alpha>0$ and $\ell_x:\R^n\rightarrow[0,\infty)$ is continuous, convex and with $\ell_x(x)\in L^1([0,t_f],\R)$ for all $x\in L^2([0,t_f],\R^n)$ and such that the $L^2$-gradient $\nabla_x J(x)$ of the mapping $x\mapsto \int_0^{t_f}\ell_x(x(\tau))\,\mathrm{d}\tau$ exists for all $x\in L^2([0,t_f],\R^n)$. Clearly, the mapping $u\mapsto\tfrac{\alpha}{2}\|u\|_{L^2}^2$ is differentiable with gradient $\nabla_u J(u)=\alpha u$. Therefore, $J$ is differentiable with gradient \[\nabla J(x,u)=\begin{bmatrix}\nabla_x J(x)\\\alpha u\end{bmatrix}.\]  
For brevity of notation, we will identify $L^2([0,t_f],\R^n)\times L^2([0,t_f],\R^m)$ with
$L^2([0,t_f],\R^n\times \R^m)$ in the following.

To show existence of solutions of \eqref{eq:oc_abstract}, we pursue the common strategy of eliminating the state. 
This is achieved by applying the variation of constants formula, leading to the input-to-state map
\begin{align}
\label{eq:input_2_state}
\begin{split}
&    I_{x_0,f}:L^2([0,t_f],\R^m)\rightarrow H^1([0,t_f],\R^n),\\  &I_{x_0,f}(u)(t):=e^{At}x_0+\int_0^te^{A(t-\tau)}(Bu(\tau)+f(\tau)){\rm d}\tau=x(t).
\end{split}
\end{align}
With this we may define the cost function only depending on the input:
\begin{align}
    \label{def:J_u}
J_u:L^2([0,t_f],\R^m)\rightarrow[0,\infty),\quad u\mapsto J_u(u):=\int_0^{t_f}\ell_x(I_{x_0,f}(u)(\tau))+\frac{\alpha}{2}\|u(\tau)\|^2{\rm d}\tau.
\end{align}
We will use the cost function $J_u$ to prove existence of a minimizer for \eqref{eq:oc_abstract}.



 \begin{lem} 
 \label{lem:cost}
 Let $J$ and $J_u$ be given by \eqref{def:J} and \eqref{def:J_u}, respectively. Then:
 \begin{itemize}
     \item[\rm (a)] $J$ is convex;
     \item[\rm (b)] $J_u$ is strictly convex, continuous  and coercive.
 \end{itemize}
 \end{lem}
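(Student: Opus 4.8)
For part (a), the plan is to argue convexity of $J$ directly from the definition. The functional $J$ splits as $J(x,u) = \int_0^{t_f}\ell_x(x(\tau))\,\mathrm{d}\tau + \tfrac{\alpha}{2}\|u\|_{L^2}^2$. The second summand is convex in $u$ (indeed $u\mapsto\|u\|_{L^2}^2$ is convex, being a composition of the convex norm with squaring on $[0,\infty)$, or simply because its gradient $\alpha u$ is monotone). The first summand is convex because $\ell_x$ is convex on $\R^n$: for $\beta\in[0,1]$ and $x_1,x_2\in L^2$, pointwise convexity $\ell_x(\beta x_1(\tau)+(1-\beta)x_2(\tau))\le \beta\ell_x(x_1(\tau))+(1-\beta)\ell_x(x_2(\tau))$ integrates over $[0,t_f]$ (the integrand is in $L^1$ by hypothesis). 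A sum of convex functions is convex, so $J$ is convex. This part is essentially routine.

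For part (b), I would establish the three properties in turn, using the structure of $J_u(u) = \int_0^{t_f}\ell_x(I_{x_0,f}(u)(\tau))\,\mathrm{d}\tau + \tfrac{\alpha}{2}\|u\|_{L^2}^2$. \emph{Continuity:} the input-to-state map $I_{x_0,f}:L^2\to H^1\hookrightarrow L^2$ is affine and bounded (the convolution with $e^{A\cdot}$ is a bounded operator on $L^2([0,t_f],\R^n)$, by Young's inequality applied on the finite interval), hence continuous; composing with the continuous map $x\mapsto\int_0^{t_f}\ell_x(x(\tau))\,\mathrm{d}\tau$ on $L^2$ (continuity of which is assumed, or follows from continuity of $\ell_x$ together with the $L^2$-differentiability hypothesis) and adding the continuous quadratic term gives continuity of $J_u$. \emph{Strict convexity:} write $I_{x_0,f}(u) = Lu + w$ with $L$ linear bounded and $w = I_{x_0,f}(0)$. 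Then $u\mapsto \int_0^{t_f}\ell_x((Lu+w)(\tau))\,\mathrm{d}\tau$ is convex (convex function composed with affine map, as in part (a)), while $u\mapsto\tfrac{\alpha}{2}\|u\|_{L^2}^2$ is \emph{strictly} convex since $\alpha>0$. The sum of a convex and a strictly convex function is strictly convex. \emph{Coercivity:} since $\ell_x\ge 0$, we have $J_u(u)\ge\tfrac{\alpha}{2}\|u\|_{L^2}^2\to\infty$ as $\|u\|_{L^2}\to\infty$.

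The only point requiring a little care — and the closest thing to an obstacle — is the continuity of $u\mapsto\int_0^{t_f}\ell_x(I_{x_0,f}(u)(\tau))\,\mathrm{d}\tau$, since continuity of $\ell_x$ on $\R^n$ does not automatically upgrade to continuity of the integral functional on $L^2$ without some growth control; here one leans on the standing hypothesis on $\ell_x$ (continuity plus existence of the $L^2$-gradient of $x\mapsto\int_0^{t_f}\ell_x(x(\tau))\,\mathrm{d}\tau$, which in particular forces this map to be well-defined and, being convex and finite on all of $L^2$, continuous on $L^2$ — a finite convex function on a Banach space that is, say, bounded above on a neighborhood is continuous, and convex functions on Hilbert spaces that are everywhere differentiable are continuous). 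Everything else is a direct verification of the definitions combined with the affine boundedness of $I_{x_0,f}$ and the strict convexity contributed by the regularization term $\tfrac{\alpha}{2}\|u\|_{L^2}^2$.
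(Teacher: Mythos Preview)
Your proof is correct and follows essentially the same line as the paper for convexity, strict convexity, and coercivity: pointwise convexity of $\ell_x$ integrated up, affinity of $I_{x_0,f}$ in $u$, strict convexity contributed by $\tfrac{\alpha}{2}\|u\|_{L^2}^2$, and the lower bound $J_u(u)\geq \tfrac{\alpha}{2}\|u\|_{L^2}^2$ from $\ell_x\geq 0$.

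The one genuine difference is in the continuity argument. You route through $L^2$: $I_{x_0,f}:L^2\to L^2$ is affine bounded, and you then argue that $x\mapsto\int_0^{t_f}\ell_x(x(\tau))\,\mathrm{d}\tau$ is continuous on $L^2$ by invoking the standing hypothesis that this map has an $L^2$-gradient everywhere (Fr\'echet differentiability implies continuity). The paper instead routes through $C([0,t_f],\R^n)$: it views $I_{x_0,f}$ as a continuous map $L^2([0,t_f],\R^m)\to C([0,t_f],\R^n)$ (via $H^1\hookrightarrow C$), and then observes that $x\mapsto\int_0^{t_f}\ell_x(x(\tau))\,\mathrm{d}\tau$ is continuous on $C([0,t_f],\R^n)$ using only the pointwise continuity of $\ell_x$. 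The paper's route is slightly more economical in that it does not need the differentiability hypothesis for this step, whereas your route makes that hypothesis do double duty; on the other hand, your observation that Fr\'echet differentiability already forces continuity of the integral functional on $L^2$ is a perfectly valid shortcut given the assumptions in place.
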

 \begin{proof}
The convexity of $J$ follows from linearity and monotonicity of integration. \changed{Note that $I_{x_0,f}(\beta u_1+(1-\beta)u_2)(t)=\beta I_{x_0,f}(u_1)(t)+(1-\beta)I_{x_0,f}(u_2)(t)$} holds for all $\beta\in[0,1]$ and all $u_1,u_2\in L^2([0,t_f],\R^m)$. Furthermore, $J_u$ is continuous as the concatenation of the continuous functions $F_1:L^2([0,t_f],\R^m)\rightarrow C([0,t_f],\R^n)$, \changed{$u\mapsto I_{x_0,f}(u)$} and $F_2:C([0,t_f],\R^n)\rightarrow\R$, $x\mapsto\int_0^{t_f}\ell_x(x(\tau))\mathrm{d}\tau$. 
The strict convexity of $J_u$ follows from the convexity of the first term $\int_0^{t_f}\ell_x(I_{x_0,f}(u)(\tau))\,\mathrm{d}\tau$ \changed{(as its integrand is the concatenation of a convex and a linear function, hence convex, and due to monotonicity of the integral)} and the fact that $u\mapsto\int_0^{t_f}\tfrac{\alpha}{2}\|u(\tau)\|^2\mathrm{d}\tau$ is strictly convex. Finally, the coercivity follows as $J_u$ is the sum of a nonnegative function (due to $\ell_x(x)\geq 0$) and the coercive function $\int_0^{t_f}\tfrac{\alpha}{2}\|u(\tau)\|^2\,\mathrm{d}\tau$.
 \end{proof}
 
\noindent Having discussed existence of solutions, we now derive optimality conditions. To this end, we treat the two constraints of the problem \eqref{eq:oc_with_dynamics}, encoding the dynamics of the ODE and the initial condition, by means of the unbounded operator
   \begin{align}
   \label{eq:constraint}
   \begin{split}
    \calC&: L^2([0,t_f],\R^{n}\times \R^m) \supset D(\mathcal{C}) \to L^2([0,t_f],\R^n) \times \R^n,\\
    D(\calC)&:= H^1([0,t_f],\R^n)\times L^2([0,t_f],\R^m),\quad 
    \calC\changed{\bvek{x}{u}} = \begin{bmatrix}\tfrac{\mathrm{d}}{\mathrm{d}\tau} x - Ax - Bu\\x(0)\end{bmatrix}
       \end{split}
\end{align} 
where $Ax+Bu$ has to be viewed as a bounded linear operator on $L^2([0,t_f],\R^n \times \R^m)$. Below, we collect some properties of $\mathcal{C}$ and its adjoint $\calC^*$.
 \begin{lem}
 \label{lem:C}
 The operator $\calC$ as defined in \eqref{eq:constraint} is densely defined and closed. Its adjoint $\calC^*$ is given by 
     \begin{align*}
        \calC^*: L^2([0,t_f],\R^n)\times \R^n \supset D(\mathcal{C}^*) \to L^2([0,t_f],\R^n \times\R^m),\quad \calC^*\changed{\bvek{\lambda}{\lambda_0}}= \begin{bmatrix}
        -\frac{\mathrm{d}}{\mathrm{d}\tau} \lambda - A^\top \lambda\\
        -B^\top \lambda
    \end{bmatrix}\\
    D(\calC^*)= \{(\lambda,\lambda_0)\in L^2([0,t_f],\R^n)\times \R^n\,|\,\lambda \in H^1([0,t_f],\R^n)\wedge \lambda(0)=\lambda_0\wedge \lambda(T)=0\}. 
    \end{align*}
    Furthermore, $\ran\calC=L^2([0,t_f],\R^n)\times \R^n$, i.e.\ $\calC$ is onto and  $\ker\calC^*=\{0\}$.
 \end{lem}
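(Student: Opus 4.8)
The plan is to establish the four assertions — density of the domain, closedness, the explicit form of $\calC^*$, and surjectivity of $\calC$ together with triviality of $\ker\calC^*$ — essentially by unwinding the definitions and using the structure of $\calC$ as a bounded perturbation of a differentiation-type operator coupled with an evaluation functional. First I would note that $D(\calC) = H^1([0,t_f],\R^n)\times L^2([0,t_f],\R^m)$ is dense in $L^2([0,t_f],\R^n\times\R^m)$, since $H^1$ is dense in $L^2$. For closedness, I would split $\calC = \calC_0 - P$ where $\calC_0\spvek{x}{u} = \spvek{\dot x}{x(0)}$ and $P\spvek{x}{u} = \spvek{Ax+Bu}{0}$ is bounded; since a bounded perturbation preserves closedness, it suffices to show $\calC_0$ is closed. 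This follows from the continuity of the trace operator $x\mapsto x(0)$ on $H^1$ together with the fact that weak differentiation is a closed operation: if $(x_k,u_k)\to(x,u)$ in $L^2$ and $(\dot x_k, x_k(0))\to(g,c)$ in $L^2\times\R^n$, then $x$ is weakly differentiable with $\dot x = g$, hence $x\in H^1$, and then $x_k\to x$ in $H^1$ forces $x_k(0)\to x(0)$, so $c = x(0)$ and $(x,u)\in D(\calC)$ with $\calC_0\spvek{x}{u} = \spvek{g}{c}$.

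Next, to identify $\calC^*$, I would compute, for $(x,u)\in D(\calC)$ and $(\lambda,\lambda_0)$ a candidate adjoint element, the pairing $\langle \calC\spvek{x}{u}, \sbvek{\lambda}{\lambda_0}\rangle = \int_0^{t_f}\langle \dot x - Ax - Bu,\lambda\rangle\,d\tau + \langle x(0),\lambda_0\rangle$. Integration by parts on the $\int \langle \dot x,\lambda\rangle$ term produces the boundary contribution $\langle x(t_f),\lambda(t_f)\rangle - \langle x(0),\lambda(0)\rangle$ and the bulk term $-\int_0^{t_f}\langle x,\dot\lambda\rangle\,d\tau$. Collecting terms, one obtains
\begin{align*}
\langle \calC\spvek{x}{u},\sbvek{\lambda}{\lambda_0}\rangle = \int_0^{t_f}\left\langle \spvek{x}{u}, \spvek{-\dot\lambda - A^\top\lambda}{-B^\top\lambda}\right\rangle d\tau + \langle x(t_f),\lambda(t_f)\rangle + \langle x(0),\lambda_0 - \lambda(0)\rangle.
\end{align*}
For this to equal $\langle \spvek{x}{u}, \calC^*\sbvek{\lambda}{\lambda_0}\rangle$ with $\calC^*$ bounded-below-controlled (i.e.\ for the map $(x,u)\mapsto$ boundary terms to be $L^2$-continuous), the boundary terms must vanish for all admissible $(x,u)$; since $x(0)$ and $x(t_f)$ range freely over $\R^n$ as $x$ varies in $H^1$, this forces $\lambda(t_f) = 0$ and $\lambda_0 = \lambda(0)$, and also forces $\lambda\in H^1$ so that the integration by parts is legitimate (the standard argument: the functional $x\mapsto \int\langle \dot x,\lambda\rangle$ being $L^2$-bounded on the relevant subspace implies $\lambda\in H^1$). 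This yields exactly the claimed domain and action of $\calC^*$.

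Finally, for surjectivity of $\calC$: given $(g,c)\in L^2([0,t_f],\R^n)\times\R^n$, I would solve $\dot x - Ax = g$ (taking $u = 0$, $B$ irrelevant here) with $x(0) = c$ via the variation-of-constants formula $x(t) = e^{At}c + \int_0^t e^{A(t-\tau)}g(\tau)\,d\tau$, which lies in $H^1$ since $g\in L^2$; then $\calC\spvek{x}{0} = \spvek{g}{c}$, so $\ran\calC = L^2([0,t_f],\R^n)\times\R^n$. The triviality of $\ker\calC^*$ then follows from the general identity $\ker\calC^* = (\ran\calC)^\perp$, valid for densely defined closed operators, which gives $\ker\calC^* = \{0\}$; alternatively one checks directly that $-\dot\lambda - A^\top\lambda = 0$ with $\lambda(t_f) = 0$ forces $\lambda\equiv 0$ by uniqueness for the terminal-value ODE. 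I expect the main obstacle to be the rigorous justification of the integration-by-parts/regularity step in identifying $\calC^*$ — specifically, arguing that an element in the domain of the adjoint must itself be $H^1$ (rather than merely $L^2$) and must satisfy the boundary conditions — since this requires care in choosing test functions $x$ that isolate each boundary value while keeping the bulk pairing controlled; the density, closedness, and surjectivity parts are routine.
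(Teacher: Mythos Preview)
Your proposal is correct and follows essentially the same route as the paper: reduce to the unperturbed operator $\calC_0$ (the paper calls it $\calD$), compute the adjoint via integration by parts with the two-inclusion argument (testing against $H^1_0$ functions to extract $\lambda\in H^1$, then varying boundary values freely to pin down $\lambda(t_f)=0$ and $\lambda(0)=\lambda_0$), prove surjectivity by variation of constants with $u=0$, and deduce $\ker\calC^*=\{0\}$ from $(\ran\calC)^\perp$. Your treatment is in fact slightly more complete than the paper's, since you explicitly argue density and closedness (which the paper's appendix does not spell out), and the obstacle you flag is precisely the step the paper handles with care.
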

 \begin{proof}
The proof is presented in Appendix~\ref{sec:proof_lemma_c}.
 \end{proof}
\noindent Using the operator~$\calC$ given by \eqref{eq:constraint}, we can concisely write the optimal control problem \eqref{eq:oc_with_dynamics} as 
\begin{align}\label{eq:oc_abstract}
    \min_{(x,u)\in H^1([0,t_f],\R^n)\times L^2([0,t_f],\R^m)} J(x,u), \quad \changed{\mathrm{s.t.}}\quad \calC\begin{bmatrix}x\\ u \end{bmatrix}=\begin{bmatrix}f\\ x^0\end{bmatrix}.
\end{align}



\noindent We briefly recall a standard result from optimization \cite{EkelTema99}.
\begin{lem}\label{lem:eketem}
    Let $X$ be a Hilbert space, $F:X \to \R$ be Fréchet differentiable and $C\subset X$ be convex. Then, any solution $x^* \in \arg\min_{x\in C}F(x)$ satisfies $\langle F'(x^*), x-x^*\rangle \geq 0$ for all $x\in C$. If $C = \{x^*\} + V$ for some subspace $V$ of $X$, then $\langle F'(x^*),v\rangle = 0$ for all $v \in V$. 
\end{lem}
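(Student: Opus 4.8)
The plan is to run the standard first-order variational argument. Fix an arbitrary $x\in C$ with $x\neq x^*$ (the case $x=x^*$ being trivial). By convexity of $C$, the point $x^*+t(x-x^*)=(1-t)x^*+tx$ belongs to $C$ for every $t\in[0,1]$, so minimality of $x^*$ yields $F(x^*+t(x-x^*))\geq F(x^*)$ for all such $t$. I would then apply Fréchet differentiability at $x^*$ with increment $h=t(x-x^*)$, using the identification $\mathrm{D}F(x^*)=F'(x^*)$ via the Riesz theorem, to write $F(x^*+t(x-x^*))-F(x^*)=t\,\langle F'(x^*),x-x^*\rangle+o(t(x-x^*))$. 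Dividing by $t>0$ and letting $t\downarrow 0$, the remainder vanishes because $\|o(t(x-x^*))\|/t=\bigl(\|o(t(x-x^*))\|/\|t(x-x^*)\|\bigr)\,\|x-x^*\|\to 0$, while the left-hand difference quotients are nonnegative; hence $\langle F'(x^*),x-x^*\rangle\geq 0$.

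For the affine case $C=\{x^*\}+V$, I would apply the inequality just established to the two admissible points $x=x^*+v\in C$ and $x=x^*-v\in C$, both of which lie in $C$ because $V$ is a subspace and therefore contains both $v$ and $-v$. This gives $\langle F'(x^*),v\rangle\geq 0$ and $\langle F'(x^*),-v\rangle\geq 0$ simultaneously, so $\langle F'(x^*),v\rangle=0$ for every $v\in V$.

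There is essentially no obstacle here, as this is a textbook argument. The only point deserving a little care is the passage to the limit in the difference quotient: the survival of exactly the linear term $\langle F'(x^*),x-x^*\rangle$ is precisely the defining property of the Fréchet derivative recalled in the preliminaries. One should also note that $t$ is only permitted to approach $0$ from the right, so in the convex case one obtains only the one-sided inequality rather than an equality — this is why the affine structure is genuinely needed to upgrade the conclusion to $\langle F'(x^*),v\rangle=0$ in the second statement.
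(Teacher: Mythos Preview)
Your argument is correct and matches the paper's approach: the paper simply cites \cite[Proposition~2.1]{EkelTema99} for the first-order variational inequality (whose proof is exactly the difference-quotient argument you wrote out), and then uses the identical $v,-v\in V$ trick for the affine case.
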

\begin{proof}
    The first claim follows from \cite[Proposition 2.1]{EkelTema99}. For the second claim, we observe that $\langle F'(x^*),x-x^*\rangle \geq 0$ for all $x\in C = \{x^*\}+V$ if and only if $\langle F'(x^*),v\rangle \geq 0$ for all $v\in V$. As $V$ is a subspace, $-v\in V$ for all $v\in V$ such that $-\langle F'(x^*),v\rangle \geq 0$ for all $v\in V$. This implies that $\langle F'(x^*),v\rangle = 0$ for all $v\in V$.
\end{proof}

\noindent Using Lemma~\ref{lem:eketem}, we obtain the following optimality conditions. 
\begin{prop}
\label{prop:optimality_condition}
    There exists a unique optimal solution $(\hat x,\hat u) \in \dom \calC$ for \eqref{eq:oc_abstract}. Further, there exists a Lagrange multiplier $(\hat \lambda, \hat \lambda_0)\in \dom{\calC^*}$ such that
    \begin{align}
    \label{eq:optimality_conditions}
M_\mathrm{opt}(\hat x,\hat u,(\hat \lambda,\hat \lambda_0)) :=  \begin{bmatrix}
       \begin{bmatrix}\nabla_xJ(\hat x,\hat u) \\ \nabla_uJ(\hat x,\hat u)\end{bmatrix} - \calC^*\sbvek{\hat\lambda}{\hat\lambda_0} \\
        \calC \sbvek{\hat x}{\hat u}
    \end{bmatrix}
    = 
        \begin{bmatrix}
            0\\0\\\sbvek{f}{x^0}
        \end{bmatrix}.
\end{align}
Moreover, the operator $M_\mathrm{opt}:\dom\calC\times\dom{\calC^*}\rightarrow$ 
$L^2([0,t_f],\R^n\times\R^m\times\R^n)\times\R^n$ 
is bijective and m-accretive. In particular, the solution $(\hat x,\hat u,(\hat \lambda,\hat \lambda_0))$ of \eqref{eq:optimality_conditions} is unique.
\end{prop}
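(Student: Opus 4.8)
The plan is to prove the four assertions — existence of an optimal solution, existence of a Lagrange multiplier satisfying \eqref{eq:optimality_conditions}, bijectivity of $M_\mathrm{opt}$, and m-accretivity of $M_\mathrm{opt}$ — in a logical order that lets each feed the next. First I would establish existence and uniqueness of the minimizer: by Lemma~\ref{lem:cost}(b), $J_u$ is strictly convex, continuous and coercive on the Hilbert space $L^2([0,t_f],\R^m)$, so the direct method of the calculus of variations (weak lower semicontinuity of a continuous convex functional plus coercivity) yields a unique minimizer $\hat u$, and then $\hat x := I_{x^0,f}(\hat u)$ is the corresponding state, with $(\hat x,\hat u)\in\dom\calC$ solving \eqref{eq:oc_abstract}.

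Next I would derive the optimality system. Since $\calC$ is onto with $\ran\calC = L^2([0,t_f],\R^n)\times\R^n$ (Lemma~\ref{lem:C}), the feasible set $\{(x,u)\in\dom\calC : \calC\sbvek{x}{u} = \sbvek{f}{x^0}\}$ is an affine subspace, a translate of $\ker\calC$. Applying Lemma~\ref{lem:eketem} with $C$ this affine subspace and $V = \ker\calC$, the minimizer satisfies $\langle \nabla J(\hat x,\hat u), v\rangle = 0$ for all $v\in\ker\calC$, i.e.\ $\nabla J(\hat x,\hat u)\in(\ker\calC)^\perp = \overline{\ran\calC^*}$. Here I would need $\ran\calC^*$ to be closed; this follows because $\calC$ is onto and closed, so by the closed range theorem $\ran\calC^*$ is closed and equals $(\ker\calC)^\perp$. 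Hence there exists $(\hat\lambda,\hat\lambda_0)\in\dom{\calC^*}$ with $\calC^*\sbvek{\hat\lambda}{\hat\lambda_0} = \nabla J(\hat x,\hat u)$, which together with feasibility is exactly \eqref{eq:optimality_conditions}. Uniqueness of $(\hat\lambda,\hat\lambda_0)$ follows from $\ker\calC^* = \{0\}$ (Lemma~\ref{lem:C}).

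For m-accretivity of $M_\mathrm{opt}$: writing $z = (x,u)$ and $w = (\lambda,\lambda_0)$, the operator has the block form $M_\mathrm{opt}(z,w) = \bigl(\nabla J(z) - \calC^* w,\ \calC z\bigr)$ minus the constant $(0,\sbvek{f}{x^0})$. Constant shifts do not affect accretivity, so I compute, for two points $(z_1,w_1),(z_2,w_2)$ in the domain,
\[
\langle M_\mathrm{opt}(z_1,w_1) - M_\mathrm{opt}(z_2,w_2),\, (z_1,w_1)-(z_2,w_2)\rangle = \langle \nabla J(z_1)-\nabla J(z_2), z_1-z_2\rangle,
\]
because the cross terms $-\langle\calC^*(w_1-w_2),z_1-z_2\rangle + \langle\calC(z_1-z_2),w_1-w_2\rangle$ cancel. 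This is $\geq 0$ by convexity of $J$ (Lemma~\ref{lem:cost}(a)), so $M_\mathrm{opt}$ is accretive. This is essentially the same structural cancellation as in \eqref{def:m_m1_m2} with $\mathcal{M}_1 = \nabla J$, $\mathcal{M}_2 = \calC$. Bijectivity (which in particular gives m-accretivity, since surjectivity of $\lambda I + M_\mathrm{opt}$ for $\lambda>0$ follows from the same argument applied to the shifted problem) I would obtain by identifying the zero set of $M_\mathrm{opt} - (0,\sbvek{f}{x^0})$ with the solutions of the strictly convex problem \eqref{eq:oc_abstract}: injectivity follows from uniqueness of $(\hat x,\hat u,\hat\lambda,\hat\lambda_0)$ just established, and surjectivity — hitting an arbitrary right-hand side $(g_1,g_2,\sbvek{f}{x^0})$ — amounts to solving the perturbed optimality system, which corresponds to minimizing $J(x,u) - \langle (g_1,g_2),(x,u)\rangle$ subject to $\calC\sbvek{x}{u} = \sbvek{f}{x^0}$; this auxiliary functional is again strictly convex, continuous and coercive (the linear perturbation does not destroy coercivity of the quadratic-plus-nonnegative structure), so the same existence argument applies and produces the required preimage.

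The main obstacle I anticipate is the functional-analytic bookkeeping around $\ran\calC^*$ being closed and the rigorous passage from the variational inequality of Lemma~\ref{lem:eketem} to the existence of the multiplier — i.e.\ making precise that $(\ker\calC)^\perp = \ran\calC^*$ rather than merely $\overline{\ran\calC^*}$, which relies on $\calC$ being closed with closed (indeed full) range. The coercivity of the perturbed functional needed for surjectivity also deserves a line of care, but is routine given the nonnegativity of $\ell_x$ and the quadratic term in $u$.
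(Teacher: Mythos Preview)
Your proposal follows essentially the same route as the paper: existence and uniqueness of the minimizer via the reduced functional $J_u$ and Lemma~\ref{lem:cost}(b); existence of the multiplier via Lemma~\ref{lem:eketem} together with $(\ker\calC)^\perp=\ran\calC^*$ from the closed range theorem (using surjectivity of $\calC$ from Lemma~\ref{lem:C}); accretivity from the skew-symmetric cancellation and convexity of $J$; and surjectivity of $M_\mathrm{opt}$ by solving the linearly perturbed problem with cost $J(x,u)-\langle(g_1,g_2),(x,u)\rangle$. Two small remarks: first, $M_\mathrm{opt}$ as defined in \eqref{eq:optimality_conditions} does not itself contain the constant $(0,\sbvek{f}{x^0})$---that is the right-hand side---so no ``constant shift'' comment is needed; second, for injectivity the paper argues directly from the accretivity computation (which yields $0\geq\alpha\|\hat u_1-\hat u_2\|^2$, hence $\hat u_1=\hat u_2$, then $\hat x_1=\hat x_2$ via the input-to-state map, then $(\hat\lambda_1,\hat\lambda_{0,1})=(\hat\lambda_2,\hat\lambda_{0,2})$ from $\ker\calC^*=\{0\}$), rather than reducing to uniqueness of a single preimage as you do---your phrasing only literally gives uniqueness for the particular right-hand side, though the same strict-convexity reasoning applied to the perturbed problem would close the gap.

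The one step that does not go through as you state it is the passage from bijectivity to m-accretivity. You claim that surjectivity of $\lambda I+M_\mathrm{opt}$ for $\lambda>0$ ``follows from the same argument applied to the shifted problem'', but the shifted system $(\lambda I+M_\mathrm{opt})(z,w)=(g,h)$ has dual equation $\lambda w+\calC z=h$, which is no longer an affine constraint on the primal variable $z$ alone; hence it is not the KKT system of an optimal control problem of the same type, and your variational existence argument does not apply directly. The paper instead uses that bijectivity gives $0\in\rho(M_\mathrm{opt})$ and invokes openness of the resolvent set~\cite{Kato95} to conclude that $\lambda I+M_\mathrm{opt}$ is surjective for all sufficiently small $\lambda>0$.
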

\begin{proof}
The existence of the optimal solution $(\hat x,\hat u) \in \dom \calC$ for \eqref{eq:oc_abstract} follows from the fact that $J_u$ is continuous and coercive by Lemma~\ref{lem:cost}, cf.~\cite[Proposition 1.2]{EkelTema99}. Uniqueness of the optimal solution $(\hat x,\hat u)$ follows by strict convexity of $J_u$. To derive the optimality conditions \eqref{eq:optimality_conditions} we use a standard argumentation, see e.g.~\cite{schiela2013concise}, which we briefly present in our context for completeness. First, as $\ker \mathcal{C}$ is a linear space, by Lemma~\ref{lem:eketem} this implies
    \begin{align*}
        \langle \nabla J(\hat x,\hat u), v\rangle = 0 \qquad \forall v\in \ker \calC\subseteq L^2([0,t_f],\R^{n}\times \R^m).
    \end{align*}
    and hence
\begin{align*}
    \nabla J(\hat x,\hat u) \in (\ker \calC)^\perp= \{\ell \in L^2([0,t_f],\R^n \times \R^m)~\,|\,~ \langle \ell, v\rangle = 0\ \forall v\in \ker \calC\}.
\end{align*}
As $\calC$ has closed range due to its surjectivity (see Lemma~\ref{lem:C}), the closed range theorem implies $(\ker \calC)^\perp = \ran \calC^*$. That is, there exists $(\hat \lambda,\hat\lambda_0)\in \dom {\calC^*}$ such that
\begin{align*}
    \begin{bmatrix}
        \nabla_xJ(\hat x,\hat u)\\
        \nabla_uJ(\hat y,\hat u)
    \end{bmatrix} = \calC^*\changed{\bvek{\hat\lambda}{\hat\lambda_0}}.
\end{align*}
This proves existence of $(\hat \lambda,\hat\lambda_0)\in\dom {\calC^*}$ such that the optimality conditions \eqref{eq:optimality_conditions} hold.

We continue by showing injectivity of $M_\mathrm{opt}$. Let $(\hat x_1,\hat u_1,\hat\lambda_1,\hat\lambda_{0,1})$ and $(\hat x_2,\hat u_2,\hat\lambda_2,\hat\lambda_{0,2})$ be two solutions of \eqref{eq:optimality_conditions}. Then we obtain using the convexity of $J$,
\begin{align*}
0&=\langle M_\mathrm{opt}(\hat x_1,\hat u_1,\hat\lambda_1,\hat\lambda_{0,1})-M_\mathrm{opt}(\hat x_2,\hat u_2,\hat\lambda_2,\hat\lambda_{0,2}),(\hat x_1-\hat x_2,\hat u_1-\hat u_2,\hat\lambda_1,\hat\lambda_{0,1})\rangle\\
&=\left\langle \begin{bmatrix}
    \nabla_xJ(\hat x_1)-\nabla_xJ(\hat x_2)\\ 
    \nabla_uJ(\hat u_1)-\nabla_uJ(\hat u_2)
\end{bmatrix},\begin{bmatrix}
\hat x_1-\hat x_2 \\ \hat u_1-\hat u_2  \end{bmatrix}\right\rangle\\
&=\left\langle 
    \nabla_xJ(\hat x_1)-\nabla_xJ(\hat x_2),
\hat x_1-\hat x_2
\right\rangle+\langle 
    \nabla_uJ(\hat u_1)-\nabla_uJ(\hat u_2),\hat u_1-\hat u_2\rangle\\
    &\geq \alpha\|\hat u_1-\hat u_2\|^2,
\end{align*}
\changed{where in the last inequality we used the strict convexity and \eqref{def:convex_deriv}.}
Therefore $\hat u_1=\hat u_2$ and using the input-to-state map $I_{x_0,f}$ defined in~\eqref{eq:input_2_state}, we conclude that $\hat x_1=I_{x_0,f}(\hat u_1)=I_{x_0,f}(\hat u_2)=\hat x_2$.  
Furthermore, Lemma~\ref{lem:C} gives $\ker\calC^*=\{0\}$ and therefore, also $(\hat \lambda,\hat \lambda_0)$ is uniquely determined. Therefore, $M_\mathrm{opt}$ is injective.

Next, we prove surjectivity of $M_\mathrm{opt}$. Let $\changed{(l_x,l_u,f,x^0)\in L^2([0,t_f],\R^n\times\R^m\times\R^n)\times\R^n}$ be given and define the modified costs
    \begin{align*}
        \tilde J (x,u) := J(x,u) - \langle l_x,x\rangle
        - \langle l_u,u\rangle.
    \end{align*}
    The modified reduced cost $\tilde J_u (u):=\tilde J (I_{x_0,f}(u),u)$ clearly is continuous. Further, due to the quadratic control term in $J(x,u)$, the linearly perturbed cost $\tilde J_u(u)$ is still coercive. Thus, there is an optimal solution $(\tilde x,\tilde u)$ and associated Lagrange multiplier $(\tilde \lambda,\tilde \lambda_0)\in\dom{\calC^*}$ which fulfills the optimality conditions  \eqref{eq:optimality_conditions} with $J$ replaced by $\tilde J$, \changed{that is,
    	\begin{align*}
    	\begin{bmatrix}
    		\begin{bmatrix}\nabla_x\tilde J(\tilde x,\tilde u) \\ \nabla_u\tilde J(\tilde x,\tilde u)\end{bmatrix} - \calC^*\sbvek{\tilde\lambda}{\tilde\lambda_0} \\
    		\calC \sbvek{\tilde x}{\tilde u}
    	\end{bmatrix}
    	= 
    	\begin{bmatrix}
    		0\\0\\\sbvek{f}{x^0}
    	\end{bmatrix}.
  		\end{align*}
} Since $\nabla \tilde J_x = \nabla J_x -l_x$ and $\nabla \tilde J_u = \nabla J_u-l_u$ holds, this is equivalent to 
\changed{
    	\begin{align*}
\begin{bmatrix}
\begin{bmatrix}\nabla_x J(\tilde x,\tilde u) \\ \nabla_u J(\tilde x,\tilde u)\end{bmatrix} - \calC^*\sbvek{\tilde\lambda}{\tilde\lambda_0} \\
\calC \sbvek{\tilde x}{\tilde u}
\end{bmatrix}
= 
\begin{bmatrix}
l_x\\l_u\\\sbvek{f}{x^0}
\end{bmatrix}.
\end{align*}}
 which, \changed{as $(l_x,l_u,f,x^0)$ was arbitrary,} proves that $M_\mathrm{opt}$ is onto and thus bijective. 
    
    Finally, we show that $M_\mathrm{opt}$ is m-accretive. By convexity of the cost, $M_\mathrm{opt}$ fulfills \eqref{eq:M_dissip} and is therefore is accretive. Since $M_\mathrm{opt}$ is bijective, we have $0\in\rho(M_\mathrm{opt})$ and since the resolvent set is open~\cite{Kato95}, we have that $\lambda I-M_\mathrm{opt}$ is invertible for $\lambda>0$ sufficiently small. Hence, $\lambda I-M_\mathrm{opt}$ 
 is surjective which means that $M_\mathrm{opt}$ is m-accretive.
\end{proof}

\noindent The operator $M_\mathrm{opt}$ that is given by \eqref{eq:optimality_conditions} gives rise to the following infinite-dimensional primal-dual gradient dynamics
\begin{align}\label{eq:graddyn}
  \frac{\mathrm{d}}{\mathrm{d}t} \begin{bmatrix}
      \sbvek{x(t)}{u(t)}\\
      \sbvek{\lambda(t)}{\lambda_0(t)}
  \end{bmatrix}
  = \begin{bmatrix}
        0&\calC^*\\
        -\calC&0
    \end{bmatrix}
    \begin{bmatrix}
              \sbvek{x(t)}{u(t)}\\
\sbvek{\lambda(t)}{\lambda_0(t)}
    \end{bmatrix}
    + \begin{bmatrix}
        -\nabla J(x(t),u(t))\\
        \sbvek{f}{x_0}
    \end{bmatrix},
\end{align}
which we will call \emph{optimizer dynamics}. These may be seen as a gradient descent in the primal variable $(x,u)$ and a gradient ascent in the dual variable $(\lambda,\lambda_0)$. As we shall see later, the unique steady state of \eqref{eq:graddyn} is a solution of the optimality system~\eqref{eq:optimality_conditions}. By strict convexity of the cost function, this is the unique solution to the optimal control problem.

To formulate the above dynamics as monotone pH system as defined in Definition~\ref{def:mono_phs}, we rewrite \eqref{eq:graddyn} as
\begin{subequations}\label{eq:graddyn_LQ}
    \begin{align}\label{eq:graddyn_LQ_state}
  \frac{\mathrm{d}}{\mathrm{d}t} \begin{bmatrix}
      \sbvek{x(t)}{u(t)}\\
      \sbvek{\lambda(t)}{\lambda_0(t)}
  \end{bmatrix}
  &= \begin{bmatrix}
      -\nabla J(x(t),u(t))\\0
  \end{bmatrix}+\begin{bmatrix}
     0&\calC^*\\
        -\calC&0
    \end{bmatrix}
    \begin{bmatrix}
        \sbvek{x(t)}{u(t)}\\\sbvek{\lambda(t)}{\lambda_0(t)}
    \end{bmatrix}
    + \underbrace{\begin{bmatrix}
        0\\
        I_{2n}
    \end{bmatrix}}_{=:\mathcal{B}}
        u_\mathrm{opt}(t),
\intertext{ with $u_\mathrm{opt}(t)  
\equiv \sbvek{f}{x^0}$ and the collocated output}  
    y_{\mathrm{opt}}(t)
    &= \mathcal{B}^*\begin{bmatrix}
        \sbvek{x(t)}{u(t)}\\
        \sbvek{\lambda(t)}{\lambda_0(t)}
    \end{bmatrix} = \begin{bmatrix}
        0&I_{2n}
    \end{bmatrix}\begin{bmatrix}
        \sbvek{x(t)}{u(t)}\\
        \sbvek{\lambda(t)}{\lambda_0(t)}
    \end{bmatrix}
    =  \begin{bmatrix}
        \lambda(t)\\
        \lambda_0(t)
    \end{bmatrix}.\label{eq:graddyn_LQ_output}
\end{align}
\end{subequations}
\changed{Note that we assume only $f\in L^2([0,t_f],\mathbb{R}^m)$ in the constraints of the optimization problem~\eqref{eq:oc_with_dynamics}, but when considering the gradient optimizer dynamics~\eqref{eq:graddyn}, we have in fact an inhomogeneity that is constant in the virtual time. Therefore, the regularity of the inhomogeneity $\mathcal{B} u_\mathrm{opt}\in W^{1,1}([0,t_f],X)$ is fulfilled for the existence of strong solutions of \eqref{eq:graddyn_LQ} in the sense of Proposition~\ref{prop:inhomo}.}

Since $M_\mathrm{opt}$ is m-accretive, we immediately obtain from Proposition~\ref{prop:convergence} and Proposition~\ref{prop:optimality_condition}  the following convergence result.
\begin{cor}
\label{lem:graddyn_ph}
    The system \eqref{eq:graddyn_LQ} is a maximally monotone pH system in the sense of Definition~\ref{def:mono_phs}. Furthermore, assume that $J$ is twice differentiable and locally strictly convex with quadratic growth, i.e., there exists some $c_{\hat x}>0$, such that for all $x$ in a neighborhood of $\hat x$, the following holds
    \begin{align}
    \label{eq:convex_in_x}
        \langle \nabla_xJ(\hat x)-\nabla_xJ( x),x-\hat x\rangle\geq c_{\hat x}\|x-\hat x\|^2.
    \end{align}
    Then, for all initial values $(x(0),u(0),\lambda(0),\lambda_0(0))\in \dom{\mathcal{C}}\times \dom{\mathcal{C}^*}$, the strong solutions of the optimizer dynamics \eqref{eq:graddyn} satisfy
    \begin{align*}
        (x(t),u(t),\lambda(t),\lambda_0(t))\stackrel{t\to \infty}{\to} (\hat x, \hat u,\hat \lambda,\hat\lambda_0),
    \end{align*}
    where $(\hat x, \hat u,\hat \lambda,\hat\lambda_0)$ is the solution of the optimality conditions~\eqref{eq:optimality_conditions}, that is, $(\hat x,\hat u)$ is the unique optimal solution of~\eqref{eq:oc_abstract}. Further, there exists $\delta > 0$ such that if the initial Lagrange multiplier satisfies $\|(\lambda(0)-\hat\lambda,\lambda_0(0)-\hat\lambda_0)\|_{L^2([0,t_f],\R^n)\times \R^n}<\delta$, the convergence is exponential. 
\end{cor}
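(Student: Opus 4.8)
The plan is to recognise that \eqref{eq:graddyn_LQ} is a verbatim instance of Definition~\ref{def:mono_phs} with governing operator $M_\mathrm{opt}$, and then to read off the convergence statements from Proposition~\ref{prop:convergence}. For the first claim I would take the state space $X = L^2([0,t_f],\R^n\times\R^m\times\R^n)\times\R^n$, the operator $M=M_\mathrm{opt}$ of \eqref{eq:optimality_conditions} with domain $\dom\calC\times\dom{\calC^*}$, the bounded linear input map $B=\mathcal{B}=\begin{smallbmatrix}0\\ I_{2n}\end{smallbmatrix}$, and the constant input $u_\mathrm{opt}\equiv\spvek{f}{x^0}$. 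A direct comparison with \eqref{eq:graddyn} shows that $-M_\mathrm{opt}(z)+\mathcal{B}u_\mathrm{opt}$ is exactly the right-hand side of the optimizer dynamics and that $y_\mathrm{opt}=\mathcal{B}^*z$ is the collocated output. Since $M_\mathrm{opt}$ is m-accretive by Proposition~\ref{prop:optimality_condition}, \eqref{eq:graddyn_LQ} is then a maximally monotone pH system in the sense of Definition~\ref{def:mono_phs}.

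To invoke Proposition~\ref{prop:convergence} I would write $M_\mathrm{opt}$ in the block form \eqref{def:m_m1_m2} with $X_1=L^2([0,t_f],\R^n\times\R^m)$, $X_2=L^2([0,t_f],\R^n)\times\R^n$, $\mathcal{M}_1=\nabla J$ and $\mathcal{M}_2=\calC$, so that $\mathcal{M}_2^*=\calC^*$ by Lemma~\ref{lem:C}. Hypothesis~(i) is exactly Proposition~\ref{prop:optimality_condition} ($M_\mathrm{opt}$ m-accretive and bijective). For hypothesis~(ii), differentiability of $\mathcal{M}_1=\nabla J$ follows from the assumed twice differentiability of $J$, and strong accretivity of $\nabla J$ at $\bar x_1=(\hat x,\hat u)$ follows by combining the quadratic-growth estimate \eqref{eq:convex_in_x} on the $x$-block with the identity $\nabla_u J(u)=\alpha u$, which is $\alpha$-strongly monotone on the $u$-block, yielding a strong-accretivity constant $\min\{c_{\hat x},\alpha\}>0$. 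Proposition~\ref{prop:convergence}(a) then identifies the unique steady state as $M_\mathrm{opt}^{-1}(\mathcal{B}u_\mathrm{opt})$, which equals $(\hat x,\hat u,\hat\lambda,\hat\lambda_0)$ by \eqref{eq:optimality_conditions}; part~(c) gives global asymptotic convergence to this point, and part~(b) gives the local exponential rate, with the neighbourhood measured only in the $x_2=(\lambda,\lambda_0)$ component, which is precisely the $\delta$-ball on the initial Lagrange multiplier in the statement.

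The main obstacle is that \eqref{eq:convex_in_x} is only local, whereas the strong-accretivity inequality \eqref{eq:exp_stable} entering hypothesis~(ii) of Proposition~\ref{prop:convergence} is phrased globally; one must check that the proof of Proposition~\ref{prop:convergence}(b) — the linearisation and Lyapunov-equation argument carried out at $\bar x$ — uses strong accretivity of $\mathcal{M}_1$ only in a neighbourhood of $\bar x_1$, while the global statement~(c) additionally leans on global monotonicity of $\nabla J$ (convexity of $J$, Lemma~\ref{lem:cost}) and strict convexity of the reduced cost $J_u$ to propagate convergence of the primal block into convergence of the dual block. A related bookkeeping step is to confirm that $\mathcal{M}_1=\nabla J$ and $\mathcal{M}_2=\calC$ meet the standing requirements of \eqref{def:m_m1_m2} — in particular that $\nabla J$ is everywhere defined on $X_1$ and that $\calC$ is closed, densely defined, and, via invertibility of $M_\mathrm{opt}$, surjective with $\ker\calC^*=\{0\}$ — all of which are supplied by Lemmas~\ref{lem:cost} and~\ref{lem:C}.
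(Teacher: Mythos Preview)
Your proposal is exactly the paper's approach: the paper's proof is the single remark that the result follows from Proposition~\ref{prop:convergence} together with Proposition~\ref{prop:optimality_condition}, and you have unpacked this by identifying the block decomposition $\mathcal{M}_1=\nabla J$, $\mathcal{M}_2=\calC$, verifying the hypotheses, and reading off parts~(a), (b) and~(c). The caveat you raise about local versus global strong accretivity of $\mathcal{M}_1$ is genuine and is not addressed in the paper's one-line proof; your diagnosis that the linearisation argument behind part~(b) only needs the estimate \eqref{eq:convex_in_x} near $\bar x_1$, while part~(c) must be supplemented by the global monotonicity of $\nabla J$ coming from convexity of $J$, is the correct way to reconcile the hypotheses.
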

Corollary~\ref{lem:graddyn_ph} extends the known convergence result in finite-dimensions \cite{StegPers15} to the infinite-dimensional case. Whereas the proof in \cite{StegPers15} is based on an application of a generalized LaSalle's invariance principle~\cite{CherMall2016}, we use the particular monotone pH structure to avoid compactness assumptions usually present in LaSalle's invariance principle.

\section{Control-by-interconnection via optimizer dynamics}
\label{sec:coupled_optimizer_plant}
\noindent In the following, we provide a control-by-interconnection approach by coupling the monotone pH optimizer dynamics \eqref{eq:graddyn} with the monotone pH plant dynamics of the form~\eqref{eq:monotone_phs}. In this context, we will perform a coupling in a power-preserving way leveraging Proposition~\ref{prop:coupling}.




\noindent The plant to be controlled is assumed to be monotone pH system on $\R^n$ given by 
\begin{align}\label{eq:plant}
    \dot x_p(t)=-M_p(x_p(t))+B_pu(t),\quad  y_p(t)=B_p^\top x(t),
\end{align}
where $B_p\in \R^{n\times m}$ and $M_p:\R^n \to \R^n$ is an accretive operator. To couple the above plant~\eqref{eq:plant} (with $m$-dimensional ports) with the optimizer dynamics~\eqref{eq:graddyn_LQ} (with $2n$-dimensional ports), we utilize the skew-symmetric interconnection
\begin{align}\label{eq:coupling}
\begin{bmatrix}
    u_{\mathrm{opt},1}\\
    u_{\mathrm{opt},2}\\
        u_p
\end{bmatrix}
 = \frac{1}{\alpha}\begin{bmatrix}
    0&0&0\\
    0& 0 & B \\
     0&-B^\top & 0
 \end{bmatrix}
 \begin{bmatrix}
    y_{\mathrm{opt},1}\\
     y_{\mathrm{opt},2}\\
     y_p
 \end{bmatrix}
\end{align}
with the partitioning $u_\mathrm{opt} = \sbvek{u_{\mathrm{opt},1}}{u_{\mathrm{opt},2}}$ corresponding to $\sbvek{f}{x^0}$, cf.~\eqref{eq:graddyn_LQ_state}, and $y_\mathrm{opt} = \sbvek{y_{\mathrm{opt},1}}{y_{\mathrm{opt},2}} = \sbvek{\lambda}{\lambda_0}$ as in \eqref{eq:graddyn_LQ_output}. The interconnection is depicted in Figure~\ref{fig:coupling}.

In this feedback interconnection, the second line in \eqref{eq:coupling} sets the initial value of the underlying optimal control problem by means of the current plant output. Thus, there is no necessity to include an observer. Further, due to $y_{\mathrm{opt},2} = \lambda_0$, and in view of $\lambda(0) = \lambda_0$ for all $(\lambda,\lambda_0)\in D(\calC^*)$ as proven in Lemma~\ref{lem:C}, the last line implements the control value $u_p = -\frac{1}{\alpha}B^\top \lambda(0)$ in the plant. This corresponds to the optimal input at time zero and hence is very closely related to model predictive control, \changed{in which the initial part of the optimal control is fed back into the plant}: Reformulating the second line in the optimality system \eqref{eq:optimality_conditions}, we get
\begin{align*}
    \alpha \hat u + B^\top \hat \lambda = 0
\end{align*}
i.e., the optimal control at the initial time satisfies
\begin{align*}
     \hat u(0)  = -\frac{1}{\alpha}B^\top \hat \lambda(0),
\end{align*}
\changed{which yields a model predictive controller.}
Here, the feedback interconnection depicted in Figure~\ref{fig:coupling} yields a controller
\begin{align*}
    u_p(t) = \changed{-\frac{1}{\alpha}B^\top \lambda_0(t) = -\frac{1}{\alpha}B^\top \lambda(0)(t)}
\end{align*}
corresponding to the initial value of the current (approximate) Lagrange multiplier \changed{as $(\lambda(t),\lambda_0(t))\in D(\mathcal{C}^*)\subset H^1([0,t_f],\R^n)\times \R^n$ is a strong solution due to Proposition~\ref{lem:graddyn_ph}, where $t$ corresponds to the virtual time of the optimizer such that in particular $\lambda(t)(0) = \lambda_0(t)$. However, as this feedback is based on the current suboptimal state of the optimizer dynamics, the interconnection Figure~\ref{fig:coupling} yields a continuous-time suboptimal model predictive control scheme.}

In view of Proposition~\ref{prop:coupling}, the coupling of the optimizer dynamics \eqref{eq:graddyn_LQ} and the plant dynamics \eqref{eq:plant} via \eqref{eq:coupling} leads to the closed-loop monotone pH system
\begin{align}
\label{eq:optimizer_plant_monotone}
    \frac{\mathrm{d}}{\mathrm{d}t}\begin{bmatrix}
         x_p(t)\\
         \sbvek{x(t)}{u(t)}\\
        \sbvek{\lambda(t)}{\lambda_0(t)}
    \end{bmatrix}
    = \begin{bmatrix}
        -M_p(x_p(t))\\ -\nabla J(x(t),u(t))\\0
    \end{bmatrix}+
    \left[\begin{array}{c c c}
        0& 0 &\begin{smallbmatrix}
            0 & -B_pB^\top
        \end{smallbmatrix}\\0& 0& \mathcal{C}^*\\
         \begin{smallbmatrix}
             0\\BB_p^\top 
         \end{smallbmatrix}&        
            -\mathcal{C} & 0
    \end{array}\right]
    \begin{bmatrix}
         x_p(t)\\
         \sbvek{x(t)}{u(t)}\\
        \sbvek{\lambda(t)}{\lambda_0(t)}
    \end{bmatrix}.
\end{align}

\begin{figure}[t]
    \centering
    \includegraphics[width=.8\linewidth]{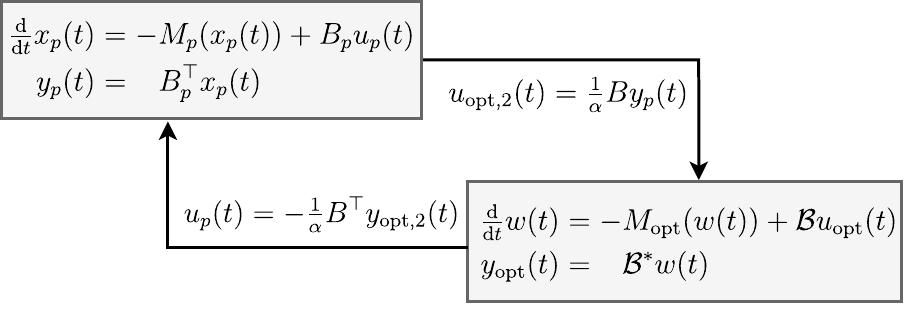}
    \caption{Coupling of plant and optimizer dynamics with optimizer dynamics' state $w=(x,u,\lambda,\lambda_0)$.}
    \label{fig:coupling}
\end{figure}

\noindent The following results show the stability of the interconnected system. \changed{Note that here, we consider stability of the origin, and therefore we consider the stability of the interconnected system in a neighborhood of $\hat x=0$. However, an extension to other setpoints is straightforward.} 
\begin{prop}
\label{prop:coupled}
The coupled system \eqref{eq:optimizer_plant_monotone} is a maximally monotone pH system. Furthermore, if $\nabla J$ is differentiable satisfying \eqref{eq:convex_in_x} with $\hat x=0$ and if the plant fulfills the coercivity assumption 
\[
\langle M_p(x),x\rangle\geq c_p\|x\|^2,
\] then for all initial values $(x_p(0),z(0),\lambda(0),\lambda_0)\in \R^n \times H^1([0,t_f],\R^n)\times L^2([0,t_f],\R^m) \times H^1([0,t_f],\R^n)\times\R^{n}$ the solution of \eqref{eq:optimizer_plant_monotone} satisfies $(x_p,x,u,\lambda,\lambda_0)(t)\rightarrow0$ as $t\rightarrow 0$.
\end{prop}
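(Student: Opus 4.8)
The statement splits into the pH-structure claim and the convergence claim, and I would treat them in turn.

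\emph{Maximally monotone pH structure.} The plan is to apply Proposition~\ref{prop:coupling}. I would take the optimizer dynamics~\eqref{eq:graddyn_LQ} as the first subsystem, which is a maximally monotone pH system by Corollary~\ref{lem:graddyn_ph} (its operator $M_\mathrm{opt}$ being m-accretive by Proposition~\ref{prop:optimality_condition}), and the plant~\eqref{eq:plant} as the second subsystem, which is maximally monotone since $M_p$ is accretive and, being continuous and defined on all of $\R^n$, automatically m-accretive; under the coercivity hypothesis $\lambda I+M_p$ is moreover coercive and onto for $\lambda\ge 0$, which in particular forces $M_p(0)=0$. I would then partition the optimizer port $u_\mathrm{opt}=\spvek{f}{x^0}$ so that the block associated with $x^0$ is interconnected with the full plant port $u_p$ through $\mathcal{F}=\tfrac1\alpha B$, while the block associated with $f$ is a free external port set to zero in~\eqref{eq:optimizer_plant_monotone}. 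With this identification the feedback~\eqref{eq:coupling} is precisely the skew-symmetric interconnection of Proposition~\ref{prop:coupling}, which then yields that~\eqref{eq:optimizer_plant_monotone} is a maximally monotone pH system with governing operator the $M$ written there.

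\emph{Convergence.} I would cast $M$ into the block form~\eqref{def:m_m1_m2} by grouping $x_1:=(x_p,x,u)$ and $x_2:=(\lambda,\lambda_0)$, so that $\mathcal{M}_1(x_p,x,u)=\bigl(M_p(x_p),\nabla_xJ(x),\alpha u\bigr)$ and $\mathcal{M}_2(x_p,x,u)=\calC\spvek{x}{u}-\spvek{0}{BB^\top x_p}$, with $\mathcal{M}_2^{*}$ the corresponding adjoint. Here $\mathcal{M}_1$ is bounded and m-accretive, differentiable provided $M_p$ is (as in Proposition~\ref{prop:convergence}(ii)), and $\mathcal{M}_2$ is a bounded perturbation of the closed, densely defined, surjective operator $\calC$ (Lemma~\ref{lem:C}), hence itself closed, densely defined and surjective (given a right-hand side one takes $x_p=0$ and uses surjectivity of $\calC$). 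I would then verify the hypotheses of Proposition~\ref{prop:convergence}. For (i), m-accretivity of $M$ is the first part; invertibility I would prove as in the uniqueness argument of Proposition~\ref{prop:optimality_condition}, reading off injectivity from $\langle M(a)-M(b),a-b\rangle\ge c_p\|a_{x_p}-b_{x_p}\|^2+\alpha\|a_u-b_u\|^2$ together with reconstruction of the pinned-down components, and surjectivity from a linearly-perturbed optimal-control problem. For (ii), $0$ is a steady state because $M_p(0)=0$ and $\nabla_xJ(0)=0$ — the latter being the content of the hypothesis $\hat x=0$, e.g.\ under $\ell_x(0)=0$ — and $\langle\mathcal{M}_1(x_1),x_1\rangle=\langle M_p(x_p),x_p\rangle+\langle\nabla_xJ(x),x\rangle+\alpha\|u\|^2$ is bounded below by $c_p\|x_p\|^2+\alpha\|u\|^2$ globally and, by~\eqref{eq:convex_in_x}, by a multiple of $\|x_1\|^2$ near $0$, so that $\mathcal{M}_1$ is strongly accretive at $\overline x_1=0$. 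Since there is no external input in~\eqref{eq:optimizer_plant_monotone}, the unique steady state is $M^{-1}(0)=0$, and Proposition~\ref{prop:convergence}(c) gives $(x_p,x,u,\lambda,\lambda_0)(t)\to 0$ as $t\to\infty$.

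\emph{The main obstacle.} Proposition~\ref{prop:convergence} does not apply verbatim, because $\mathcal{M}_1$ is only \emph{locally} strongly accretive in the primal-state component $x$ (globally it is strongly accretive only in the $(x_p,u)$-directions). I would therefore have to reproduce the proof of Proposition~\ref{prop:convergence}(c) with this weaker input: the power balance still gives $t\mapsto\|h(t)\|$ non-increasing and $\int_0^\infty\!\bigl(c_p\|h_{x_p}\|^2+\alpha\|h_u\|^2\bigr)\mathrm d\tau<\infty$, whence, the solution being in $W^{1,\infty}$, Barbalat's lemma yields $x_p(t)\to 0$ and $u(t)\to 0$; to promote this to convergence of $x,\lambda,\lambda_0$ one argues as there, using that $\mathrm{D}M(0)$ generates an exponentially stable semigroup by~\cite[Prop.~2.9]{GernHins22} (strong accretivity of $\mathrm{D}\mathcal{M}_1(0)$, surjectivity of $\mathcal{M}_2$) and that, with the positive solution $P$ of the associated Lyapunov equation, $\tfrac{\mathrm d}{\mathrm dt}\langle Ph,h\rangle\le-\tfrac12\|h\|^2$ for large $t$, so that $\int_0^\infty\|h\|^2\mathrm d\tau<\infty$ and, by monotonicity of $\|h(\cdot)\|$, $h(t)\to 0$; \cite[Cor.~2.2]{Kato95} then even gives local exponential decay. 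The genuinely delicate point — and where I expect most of the work to lie — is controlling the linearization remainder $r(h_1)$ in that Lyapunov estimate while only $x_p$ and $u$, not the full primal state $x$, are yet known to be small; here one uses that $r$ involves only the nonlinear parts $M_p,\nabla_xJ$ and that $\|h(\cdot)\|$ stays bounded.
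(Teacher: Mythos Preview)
Your approach is exactly the paper's: apply Proposition~\ref{prop:coupling} for the maximally monotone pH structure and Proposition~\ref{prop:convergence} for convergence. The paper's proof consists solely of these two citations; your block decomposition $(\mathcal{M}_1,\mathcal{M}_2)$, the invertibility argument for $M$, and the verification that $0$ is the steady state fill in what the paper leaves implicit.

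The obstacle you flag is real and is not addressed by the paper's two-line proof either: Proposition~\ref{prop:convergence}(ii) asks for $\mathcal{M}_1$ to be strongly accretive at $\overline{x}_1$, which by the definition~\eqref{eq:exp_stable} is a \emph{global} inequality, whereas hypothesis~\eqref{eq:convex_in_x} is only local in the $x$-component (coercivity of $M_p$ and the quadratic control cost give global strong accretivity only in the $(x_p,u)$-directions). So Proposition~\ref{prop:convergence} does not apply verbatim, and the paper's terse citation glosses over exactly the point you isolate. Your proposed repair --- carrying the Lyapunov argument of Proposition~\ref{prop:convergence}(c) through with only $(x_p,u)$ known to decay and controlling the linearization remainder via boundedness of $\|h\|$ --- is a plausible route, with the residual difficulty you yourself name: the $\nabla_xJ$-part of $r$ depends on $h_x$, which is not yet known to be small.
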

\begin{proof}
Both subsystems are maximally monotone pH systems, hence, we can apply Proposition~\ref{prop:coupling} to obtain the result that the coupled system \eqref{eq:optimizer_plant_monotone} is a maximally monotone pH system. The convergence follows from   Proposition~\ref{prop:convergence}.
\end{proof}

\noindent We conclude this section with a discussion of the previous stability result. We provide a result, which does not use LaSalle's invariance principle. However, due to the coercivity assumption in the plant model, we required that the uncontrolled plant is already asymptotically stable. Whereas such a property may be ensured with pre-stabilization using a stabilizing feedback controller, relaxing this assumption is subject to future work, in particular in view of the absence of such an assumption in the finite-dimensional case \cite{Pham22}.

\section{Conclusions}
\noindent This paper provides a (nonlinear) monotone port-Hamiltonian system formulation in Hilbert spaces. Besides showing shifted passivity and closedness under power-preserving couplings, we prove asymptotic and local exponential stability of equilibria for monotone pH systems that have a particular block-operator structure that appears, for instance, in optimality conditions for optimal control problems. Furthermore, from these optimality conditions, we derived the continuous-time optimizer dynamics resulting from the primal-dual gradient method. We showed that these dynamics may be viewed as an infinite-dimensional monotone pH system whose solutions converge towards the solution of the optimal control problem. Furthermore, we demonstrated that the optimizer dynamics can be coupled with a monotone plant dynamics in a structure-preserving way and established convergence of the coupled system towards a given steady state.






\bibliographystyle{abbrv}
\bibliography{references.bib}

\begin{appendix}
    \section{Proof of Lemma~\ref{lem:C}}
    \label{sec:proof_lemma_c}
\noindent \underline{\emph{Step 1:}}  We abbreviate $X_{\lambda}:= L^2([0,t_f],\R^n)\times \R^n$ and define
    \begin{align*}
    \mathcal{D} &: L^2([0,t_f],\R^n)\supset D(\calD)  \to X_{\lambda},\\
    D(\calD) &= H^1([0,t_f],\R^n), \quad \calD x := \begin{bmatrix}
        \frac{\mathrm{d}}{\mathrm{d}\tau} x\\
        x(0)
    \end{bmatrix}.
\end{align*}
We now show that $\calD^*: X_{\lambda} \supset D(\mathcal{D}^*) \to L^2([0,t_f],\R^n)$ is given by
\begin{align}
    \calD^*(\lambda,\lambda_0) &= -\frac{\mathrm{d}}{\mathrm{d}\tau} \lambda,\nonumber\\ 
    D(\calD^*) &= \{(\lambda,\lambda_0)\in X_{\lambda}\,|\,\lambda \in H^1([0,t_f],\R^n)\wedge \lambda(0)=\lambda_0\wedge \lambda(t_f)=0\}. \label{eq:domainadj}
\end{align}
To this end, we first consider the definition of the domain of the adjoint operator
\begin{align*}    D(\calD^*) = \{&(\lambda,\lambda_0)\in X_{\lambda}\,|\,\exists z\in L^2([0,t_f],\R^n): \langle \calD x,(\lambda,\lambda_0)\rangle_{X_{\lambda}} = \langle x,z\rangle_{L^2([0,t_f],\R^n)} \, \forall x\in D(\calD)\}.
\end{align*}
We first show that the set on the right-hand side of \eqref{eq:domainadj} is contained in $\dom{\calD^*}$. To this end, let $(v,v_0) \in \{(\lambda,\lambda_0)\in L^2([0,t_f],\R^n)\times \R^n\,|\,\lambda \in H^1([0,t_f],\R^n)\wedge \lambda(0)=\lambda_0\wedge \lambda(t_f)=0\}$. Then, for all $x\in \dom \calD = H^1([0,t_f],\R^n)$, 
\begin{align*}
    \langle \calD x,(v,v_0)\rangle_{X_{\lambda}} &= \int_0^{t_f} \langle \dot x(\tau),v(\tau)\rangle \,\mathrm{d}\tau + \langle x(0),v_0\rangle \\
    &= -\int_0^{t_f} \langle x(\tau),\dot v(\tau)\rangle\,\mathrm{d}\tau + \langle x(t_f),\underbrace{v(t_f)}_{=0}\rangle  - \langle x(0),v(0)\rangle + \langle x(0),v_0\rangle\\
    &= -\int_0^{t_f} \langle x(\tau),\dot v(\tau)\rangle \,\mathrm{d}\tau + \langle x(0),\underbrace{v_0- v(0)}_{=0}\rangle = \langle x,z\rangle, 
\end{align*}
with $z = \dot v \in L^2([0,t_f],\R^n)$ which implies that $(v,v_0)\in \dom{\calD^*}$.\\
Conversely, let $(v,v_0)\in \dom{\calD^*}$. By definition of the adjoint, there is $z\in L^2([0,t_f],\R^n)$ such that
\begin{align}\label{eq:adjointcontained2}
    \langle \calD x,(v,v_0)\rangle_{X_{\lambda}} = \langle x,z\rangle_{L^2([0,t_f],\R^n)} \qquad \forall x\in \dom \calD = H^1([0,t_f],\R^n).
\end{align}
In particular,
\begin{align*}
    \int_0^{t_f} \langle \dot x(\tau),v(\tau)\rangle\,\mathrm{d}\tau = \int_0^{t_f} \langle x(\tau),z(\tau)\rangle \,\mathrm{d}\tau \qquad \forall x \in H^1_0([0,t_f],\R^n)\subset \dom \calD
\end{align*}
which implies that $v\in H^1([0,t_f],\R^n)$ with $\dot v = -z$. Inserting this into \eqref{eq:adjointcontained2} and integration by parts yields
\begin{align*}
    \langle x(t_f),v(t_f)\rangle - \langle x(0),v(0)\rangle + \langle x(0),v_0\rangle = 0 \qquad \forall x\in H^1([0,t_f],\R^n).
\end{align*}
Testing this equation with $x\in H^1([0,t_f],\R^n)$ such that $x(0)=0$ implies $v(t_f)=0$ and conversely, choosing $x\in H^1([0,t_f],\R^n)$ such that $x(t_f)=0$ yields $v(0)=v_0$. Thus, $(v,v_0) \in \{(\lambda,\lambda_0)\in L^2([0,t_f],\R^n)\times \R^n\,|\,\lambda \in H^1([0,t_f],\R^n)\wedge \lambda(0)=\lambda_0\wedge \lambda(t_f)=0\}$ which proves that $\calD^*$ is given by \eqref{eq:domainadj}. 

Since the operator $\calC$ is a bounded additive perturbation of $\calD$, the adjoint of the sum is the sum of the adjoint operators and this proves the claimed formula for $\calC^*$. 
\smallskip

\noindent \underline{\emph{Step 2:}} We show that $\calC$ is surjective. Let $(x^0,f)\in L^2([0,t_f],\R^n)\times \R^n$ and let $x\in D(\mathcal{D}) = H^1([0,t_f],\R^n)$ be the solution of $\dot x = Ax + f$, $x(0)=0$. Further, set $u=0$ such that $(x,u) \in \dom{\mathcal{C}}$. Then, $\mathcal{C}\sbvek{x}{u} = \sbvek{f}{x^0}$. Hence, $\mathcal{C}$ is surjective and in particular it has closed range. Thus, from the closed range theorem, we conclude $\ker (\mathcal{C}^*) = \ran (\mathcal{C})^\perp = \{0\}$.
\end{appendix}

\end{document}